\documentclass[12pt]{article}
\usepackage{amssymb,latexsym}
\usepackage{amsmath}
\usepackage{tikz}
\usepackage{verbatim}
\usetikzlibrary {positioning}
  \usetikzlibrary{arrows,topaths}
       \usetikzlibrary{decorations,backgrounds,snakes}
\textheight 8in\textwidth 6in\oddsidemargin 0in\evensidemargin 0in
\usepackage{rawfonts}
\usepackage{amsmath, psfrag}
\usepackage{amsfonts}
\usepackage{amsthm}
\usepackage{lineno,hyperref}


\usepackage{verbatim}
\usepackage {tikz}
\usetikzlibrary {positioning}
\definecolor {processblue}{cmyk}{0.96,0,0,0}

\usepackage{latexsym}
\newtheorem{theorem}{Theorem}
\newtheorem{lemma}[theorem]{Lemma}


\def\qed{\ifhmode\unskip\nobreak\fi\quad\ifmmode\Box\else$\Box$\fi}

%
%
\begin{document}
\title{Escaping from the corner of a grid\\
by edge disjoint paths}
\author{{\sl Adam S. Jobson} \\ University of Louisville\\ Louisville, KY 40292
\and {\sl Andr\'e E. K\'ezdy}\\ University of Louisville\\ Louisville, KY 40292
\and {\sl Jen\H{o} Lehel} \\ University of Louisville\\ Louisville, KY 40292
\\
and 
\\
Alfr\'ed R\'enyi Mathematical Institute, \\
Budapest, Hungary}

\maketitle

\begin{abstract}
Let $Q$ be a finite subgraph of the integer grid $G$ in the plane, and let $T$ be a set of pairs of distinct vertices in $G$, called `terminal pairs'. Escaping a subset $X\subset T\cap Q$ from $Q$ means finding edge disjoint paths from the terminals in $X$ into distinct vertices of a set $L$ in the  boundary of $Q$. Here we prove several lemmas for the cases where $Q$ is a $3\times 3$ grid, $L$ is the union of a vertical and horizontal boundary line of $Q$, furthermore, $T$ is a set of four terminal pairs in $G$ such that $|T\cap Q|\geq 5$. These lemmas are applied in \cite{infty} and complete the proof that the Cartesian product of two (one way) infinite paths has path-pairability number four.
\end{abstract}

\section{Introduction}

Finding disjoint paths in grids emerges in several practical  applications, among others the point-to-point delivery problem  \cite{LMS},
 the reconfiguration problem for VLSI  arrays \cite{rec}. From algorithmic point of view the complexity of answering those problems   is hard in general. The results proved here help solve certain extremal problems on the linkage of terminals proposed in \cite{CSFGYLS} when modeling telecommunication networks. 

For fixed $k$, a graph $G$ is {\it $k$-path-pairable}, if for any set of $k$ disjoint pairs of vertices  $s_i,t_i$, $1\leq i\leq k$, called {\it terminals}, there exist pairwise edge-disjoint $s_i,t_i$-paths in $G$. The {\it path-pairability number}, denoted $pp(G)$,  is the largest $k$ such that $G$ is $k$-path-pairable. A summary of early results concerning
path-pairability was given in \cite{F}. The parameter $pp(G)$ was investigated recently in  \cite{infty}, \cite{pxp}, \cite{66}, for  finite and infinite grid graphs $G$ (equivalently for the Cartesian products of finite or infinite paths). 

In \cite{infty} it is proved that the path-pairability number of the positive integer quadrant in the Euclidean plane, $\Bbb{R}^2$, is four. In the proof we needed to discuss the linkage of four terminal pairs when five or more of these terminals are located in the $3\times 3$ corner of the integer quadrant.
In the present note we give the proof of those `escaping' lemmas  applied in \cite{infty}.\\

Let $G$ be the Cartesian product of two (one-way) infinite paths with 
vertices $(i,j)$, $i,j\in \{1,2,\dots\}$, with an edge between 
$(i,j)$ and $(p,q)$ if and only if $|p-i|+|q-j|=1$. We represent $G$ as a matrix of its vertices arranged in rows
 $A(i)$, $i=1,2,\dots$, and columns $B(j)$, $j=1,2,\dots$, each being an infinite path. Let $Q\subset G$ be the $3\times 3$ corner of $G$ induced by 
 $\{(i,j)\mid 1\leq i,j\leq 3\}$.  
 
The notation $H\subset G$ means that $H$ is a subgraph,  for a vertex set $S$, the notation $H-S$ is interpreted as the subgraph obtained by the removal of $S$ and the incident edges from $H$, furthermore, $x\in H$ means that $x$ is a vertex of $H$.\\

Let $T
\subset G$ be a set of $8$ distinct vertices in $G$, called {\it terminals}, partitioned into the four terminal pairs  $\pi_i=\{s_i,t_i\}$, $1\leq i\leq 4$.
To prove the $4$-path-pairability of $G$, one must find a linkage for $\pi_i$,  $1\leq i\leq 4$, that is a set of edge disjoint $s_i,t_i$-paths $P_i\subset G$. One difficulty is arising when five or more terminals are packed in $Q$ and the pairing of those terminals requires leaving $Q$ towards locations outside $Q$.

We will say that the terminals in $Q$ can `escape' from $Q$
 (thus `move' into $G-Q$) if there are edge disjoint paths from the terminals into distinct vertices of the union $L$ of the horizontal  boundary line $A=A(3)\cap Q$ and the vertical boundary line $B=B(3)\cap Q$.
The notation $\|S\|=|T\cap S|$ is the number of terminals in the subgraph $S\subseteq G$.

In section \ref{proofs} we prove escaping lemmas for the cases when $5\leq t\leq 8$ terminals of $T$ lie in $Q$; some pairs are linked in $Q$, and the  unpaired terminals are `mated'  into distinct `escape' vertices of the boundary $L=A\cup B$ using edge disjoint escape paths. Section \ref{tools} introduces a few tools used in the proofs for escaping from $Q$.

\section{Tools}\label{tools}

Escaping from $Q$, as we will handle here, requires three kinds of operation: finding a linkage of one or two terminal pairs, `moving' a terminal into a `mate' at a suitable location, and `shifting' a terminal along the boundary line $L$.

Finding a linkage for two pairs are facilitated using the property of a graph being `weakly $2$-linked' (see in \cite{T}),
and by introducing the concept of a `frame' (see \cite{66}). The first tool is stated in the next lemma (its simple proof is omitted). 

\begin{lemma}
\label{w2linked}
The subgraph $H=Q$ or $H=Q-(3,3)$ of $G$ is weakly $2$-linked, that is, for every not necessarily distinct vertices $u_1,v_1,u_2,v_2\in H$ there exist edge disjoint $u_i,v_i$-paths in $H$, for $i=1,2$.
 \qed
\end{lemma}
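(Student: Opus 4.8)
The plan is to reduce everything to a short list of configurations by exploiting symmetry and the fact that both graphs are $2$-edge-connected, to dispose of all but the ``generic'' configuration by uniform arguments, and then to settle the generic configuration by a direct inspection.

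First I would record that both $H=Q$ and $H=Q-(3,3)$ are $2$-edge-connected (every edge lies on a cycle), and that it is convenient to view $Q$ as the $8$-cycle $C$ on the boundary vertices of the grid together with the centre $(2,2)$ joined by four ``spokes'' to the four degree-$3$ boundary vertices; $Q-(3,3)$ is obtained from this picture by deleting one corner of $C$, which leaves the spokes untouched. Next I would dispose of all \emph{degenerate} inputs, i.e.\ those in which the multiset $\{u_1,v_1,u_2,v_2\}$ has at most three distinct entries. If $u_1=v_1$ (or $u_2=v_2$), take the trivial path on that vertex and join the other pair by any path, which exists since $H$ is connected. If $\{u_1,v_1\}=\{u_2,v_2\}$ as sets, Menger's theorem in the $2$-edge-connected graph $H$ yields two edge-disjoint paths between them. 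If the two pairs share exactly one vertex, after relabelling within the pairs write $u_1=u_2=x$, $v_1=a\ne b=v_2$; by Menger there are edge-disjoint $x$–$a$ paths $P,P'$, and taking a shortest path $R$ from $b$ to $V(P)\cup V(P')$ (edge-disjoint from $P\cup P'$ by minimality, say meeting it at $c\in V(P)$), the path ``$x$ to $c$ along $P$, then $c$ to $b$ along $R$'' together with $P'$ are edge-disjoint $x$–$b$ and $x$–$a$ paths. This part uses only $2$-edge-connectivity, so it covers both choices of $H$ at once.

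It then remains to handle four distinct vertices $u_1,v_1,u_2,v_2$. Here I would use the symmetry group of $H$ — the dihedral group of order $8$ for $Q$, and the order-$2$ reflection $(i,j)\mapsto(j,i)$ fixing the main diagonal for $Q-(3,3)$ — to cut the $\binom{9}{4}\cdot 3$ (resp.\ $\binom{8}{4}\cdot 3$) paired $4$-sets down to a short list, and then exhibit the two edge-disjoint paths in each surviving case. Most cases are immediate: if one of the four vertices is the centre, or if the two pairs are non-crossing when the four boundary vertices are read in cyclic order around $C$, route one pair along a short arc of $C$ and the other along a complementary arc (detouring through the centre where needed). The one configuration that requires care — and which I expect to be the main obstacle — is when all four vertices lie on $C$ and the two pairs \emph{interleave} around $C$: then neither pair can be linked inside $C$, and one is forced to route \emph{both} paths through the centre, using the four spokes two at a time, as in $(1,1)(2,1)(2,2)(2,3)(3,3)$ and $(1,3)(1,2)(2,2)(3,2)(3,1)$ for the pairs $\{(1,1),(3,3)\}$ and $\{(1,3),(3,1)\}$. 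For $Q-(3,3)$ the deleted corner turns $C$ into a path, which only decreases the number of interleaving configurations, and the same device works. (Alternatively one could bypass this last case check by invoking a structural characterisation of weakly $2$-linked graphs, cf.\ \cite{T}; the direct verification, however, is short.)
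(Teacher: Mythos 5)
The paper does not actually prove this lemma --- it is stated with ``its simple proof is omitted'' --- so there is no authorial argument to compare yours against; your proposal has to stand on its own. Its first half does: the observation that both $H=Q$ and $H=Q-(3,3)$ are $2$-edge-connected, and the reduction of all degenerate inputs (coincident pairs, pairs sharing a vertex) to Menger's theorem plus the shortest-path-to-$V(P)\cup V(P')$ splice, is correct and uniform over both graphs. The reduction of the remaining inputs to ``four distinct vertices, pairs in conflict on the boundary cycle $C$'' is also sound.

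The gap is that the one case you yourself flag as the main obstacle is asserted rather than proved. First, the uniform recipe ``route both paths through the centre, using the four spokes two at a time'' does not work with the natural nearest-spoke assignment in every interleaving configuration: take the pairs $\{(1,1),(2,3)\}$ and $\{(1,2),(2,1)\}$. If the second pair is linked through the centre via the spokes at $(1,2)$ and $(2,1)$, then $(1,1)$ --- a degree-$2$ corner whose only neighbours are $(1,2)$ and $(2,1)$ --- can no longer reach the centre at all; one must instead either keep one path on $C$ (e.g.\ $(1,1)(1,2)(1,3)(2,3)$ and $(1,2)(2,2)(2,1)$) or re-assign the spokes non-locally. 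So ``one is forced to route both paths through the centre'' is not literally true, and the construction needs a case-by-case (or at least a matching-type) argument that you have not supplied. Second, the claim that deleting $(3,3)$ ``only decreases the number of interleaving configurations'' is false: pairs that are non-crossing on the $8$-cycle can become conflicting on the resulting $7$-vertex path because the short arc through $(3,3)$ disappears --- e.g.\ $\{(2,3),(3,2)\}$ versus $\{(1,2),(2,1)\}$, which are nested intervals on the path and must now be separated through the centre. Every such configuration can in fact be routed (the lemma is true), but as written your proof exhibits one worked example where an exhaustive check, or a genuinely uniform routing rule, is required. Completing that check --- a handful of orbits under the dihedral symmetry of $Q$ and the single reflection of $Q-(3,3)$ --- would close the argument.
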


The linkage of two pairs can be obtained using a frame.
  Let $C\subset G$ be a cycle, let $x\in C$; and take edge disjoint paths from a member
  of $\pi_j$ to $x$, for $j=1,2$, not using edges of $C$. Then we say that
  the subgraph of the union of $C$ and the two paths to $x$ define a {\it frame} $[C,x]$ for $\pi_1,\pi_2$. A frame
  $[C,x]$
  helps find a linkage
  for the pairs $\pi_1$ and $\pi_2$, it is enough to mate the other members of the pairs anywhere in $G$ into any vertices of $C$  using mating paths which are edge disjoint from $[C,x]$ and from each other. An example of a frame in $G$ is seen in Fig.\ref{frame}.
  
   	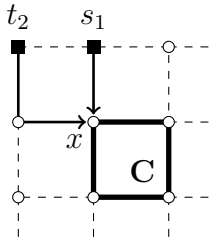
\begin{figure}[htp]
\tikzstyle{T} = [rectangle, minimum width=.1pt, fill, inner sep=2.5pt]
\tikzstyle{B} = [circle, draw=black!, minimum width=1pt, fill=white, inner sep=1.5pt]
\tikzstyle{txt}  = [circle, minimum width=1pt, draw=white, inner sep=0pt]
\tikzstyle{Wedge} = [draw,line width=.7pt,-,black!100]
\tikzstyle{M} = [circle, draw=black!, minimum width=1pt, fill=white, inner sep=1pt]
\begin{center}

  		\begin{tikzpicture}
 \draw[dashed]   (1,0)--(0,0) -- (0,1) (0,2)--(2,2) --(2,1);
  \foreach \x in {0,1,2}\draw[dashed](\x,0)--(\x,-.6){};   
  \foreach \y in {0,1,2}\draw[dashed](2,\y)--(2.6,\y){};      
   \draw[line width=2pt]    (2,0) -- (2,1) -- (1,1)--(1,0)--(2,0);
  \draw[->,line width=1pt] (0,2) -- (0,1)--(.9,1);
        \draw[->,line width=1pt]  (1,2)--(1,1.1);   
         \foreach \x in {0,1,2}\foreach \y in {0,1,2}\node[B]()at(\x,\y){};                        
                  
            \node[T,label=above:$t_2$] (t2) at (0,2) {};                                    
            \node[T,label=above:$s_1$] (s1) at (1,2) {};           
               \node[txt] () at (.75,.75) {$x$};  
            
                \node[txt] () at (1.65,.35) {\bf C};     
   
     	\end{tikzpicture}	
	
\end{center}
	\caption{A frame $[C,x]$ for $\pi_1, \pi_2$}
	\label{frame}

	\end{figure}

Given two vertices $u,v\in L$, a subgraph $CL(u,v)\subset Q$ will be called a {\it clip} on $u,v$, if for any two terminals $x,y\in CL(u,v)$ it contains two edge disjoint mating paths, 
\begin{figure}[htp]
  \tikzstyle{T} = [rectangle, minimum width=.1pt, fill, inner sep=2.5pt]
\tikzstyle{B} = [circle, draw=black!, minimum width=1pt, fill=white, inner sep=1.5pt]
\tikzstyle{txt}  = [circle, minimum width=1pt, draw=white, inner sep=0pt]
\tikzstyle{M} = [circle, draw=black!, minimum width=1pt, fill=white, inner sep=1pt]
\begin{center} 
\begin{tikzpicture}
               
\draw[dashed]  (0,1) -- (1,1)  (1,2) -- (2,2) -- (2,1) -- (1,1) (2,0) -- (2,1); 
 \draw[line width=.7,snake=zigzag]  (1,2) -- (1,1) -- (1,0)  (2,0) -- (0,0) -- (0,2) -- (1,2) ;
                 
\node[B] () at (2,1) {};      \node[B]() at (0,2) {};  
\node[B](w) at (2,2){};                     
\node[B]() at (1,1){};      \node[B]() at (1,2){};      
\node[B]() at (0,1){};      \node[B]() at (0,0){};   
          \node[T,label=below:$v$] () at (2,0) {};      \node[B]() at (2,0){}; 
          \node[T,label=below:$u$] (u) at (1,0){};  \node[B]() at (1,0){}; 
   \node[txt]() at (-.4,0){A\bf};    
          \node[txt]() at (2,2.4){B\bf};                                 
     	\end{tikzpicture}
\begin{tikzpicture}            
\draw[dashed]  (0,2) -- (2,2) -- (2,1) (1,0) -- (1,1); 
 \draw[line width=.7,snake=zigzag]   (0,0) -- (0,1) -- (2,1) -- (2,0)  
 (0,1) -- (0,2) (1,1) -- (1,2) (0,0) -- (2,0) ;
                 
\node[B] () at (2,1) {};      \node[B]() at (0,2) {};  
\node[B](w) at (2,2){};        \node[B]() at (2,0){};              
\node[B]() at (1,1){};      \node[B]() at (1,2){};      
\node[B]() at (0,1){};       
          \node[T,label=below:$u$] (u) at (0,0) {};        \node[B]() at (0,0){}; 
          \node[T,label=below:$v$] (v) at (1,0){};        \node[B]() at (1,0){};                 
     	\end{tikzpicture}
		       \hskip1truecm
\begin{tikzpicture}             
\draw[dashed]  (0,1) -- (2,1) -- (2,0) -- (1,0) (1,1) -- (1,2) ; 
 \draw[line width=.7,snake=zigzag]  (2,1) -- (2,2) -- (0,2)  -- (0,0) -- (1,0) -- (1,1) ;
                 
\node[B] () at (0,0) {};      \node[B]() at (0,2) {};  
\node[B](w) at (2,1){};        \node[B]() at (2,0){};              
\node[B]() at (1,1){};      \node[B]() at (1,2){};      
\node[B]() at (0,1){};      
    
          \node[T,label=below:$u$] (u) at (1,0) {};       \node[B]() at (1,0){};  
          \node[T,label=above:$v$] (v) at (2,2){};     \node[B]() at (2,2){};                      
     	\end{tikzpicture}
\begin{tikzpicture}             
\draw[dashed]  (0,2) -- (1,2) (1,0) -- (2,0) -- (2,1) -- (1,1) -- (1,0); 
 \draw[line width=.7,snake=zigzag]   (1,0) -- (0,0) -- (0,1) -- (1,1) -- (1,2) -- (2,2) -- (2,1) (0,1) -- (0,2) ;
                 
\node[B] () at (0,0) {};      \node[B]() at (0,2) {};  
\node[B](w) at (2,2){};        \node[B]() at (2,0){};              
\node[B]() at (1,1){};      \node[B]() at (1,2){};      
\node[B]() at (0,1){};           
     
          \node[T,label=below:$u$] (u) at (1,0) {};      \node[B]() at (1,0){}; 
          \node[T,label=right:$v$] (v) at (2,1){};   \node[B]() at (2,1){};              
     	\end{tikzpicture}

\end{center}
	\caption{AA-clips and AB-clips}
	\label{clips}

	\end{figure}
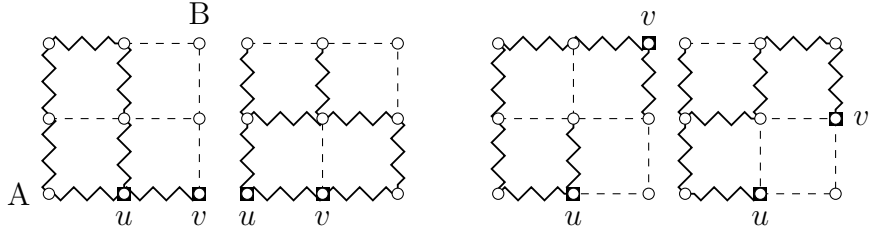
one from $x$ or $y$ to $u$, and the second one from the other terminal, $y$ or $x$, to $v$. 
A clip with $u,v \in A$ is called an AA-clip, and
a clip with $u\in A, v\in B\setminus A$  is called  an AB-clip. 
 Two-two examples of AA- and AB-clips are presented  in Fig.\ref{clips} 
 highlighted with zigzag lines.
 
 A vertex that is not a terminal (or the mate of a terminal) is called a  terminal-free vertex or simply a {\it free vertex}. A terminal 
is called a {\it singleton} if it has no pair in $Q$. 
Unlinked terminals in $L=A\cup B$ trivially escape from $Q$ without mating, unless $B\setminus A$
has two terminals which might be required as a restriction imposed on escaping. In this case a terminal must be `shifted out' from $B\setminus A$ along $L$.  If
$u\in L$ is a terminal and $v\in L$ is a free vertex, then
 vertex $u$ can be converted to a free vertex 
by {\it shifting}, denoted as $u\mapsto v$. In this way the terminal located at $u$  'moves' to the mate at location $v$ along the unique $u,v$-path $P\subset L$, and thus the edges of  $P$ are excluded from any further linkage or mating  in $Q$.
An example of shifting a terminal $s_3$ into the mate $s_3^\prime$ is seen in Fig.\ref{shift}.
  
   	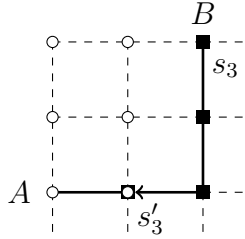
\begin{figure}[htp]
\tikzstyle{T} = [rectangle, minimum width=.1pt, fill, inner sep=2.5pt]
\tikzstyle{B} = [circle, draw=black!, minimum width=1pt, fill=white, inner sep=1.5pt]
\tikzstyle{txt}  = [circle, minimum width=1pt, draw=white, inner sep=0pt]
\tikzstyle{Wedge} = [draw,line width=.7pt,-,black!100]
\tikzstyle{M} = [circle, draw=black!, minimum width=1pt, fill=white, inner sep=1pt]
\begin{center}

  		\begin{tikzpicture}
  \foreach \x in {0,1,2}\draw[dashed](\x,0)--(\x,-.6){};   
  \foreach \y in {1,2}\draw[dashed](0,\y)--(2,\y){};      
   \foreach \x in {0,1}\draw[dashed](\x,0)--(\x,2){};   
  \foreach \y in {0,1,2}\draw[dashed](2,\y)--(2.6,\y){};  
     \draw[line width=1pt]  (0,0)--(1,0);    
   \draw[->,line width=1pt]  (2,2)--(2,0)--(1.1,0);   
         \foreach \x in {0,1,2}\foreach \y in {0,1,2}\node[B]()at(\x,\y){};                        
                  
            \node[T,label=above:$B$] (B) at (2,2) {};    
                                            
   \node[T] (s3') at (1,0) {}; \node[B] (s3') at (1,0) {};       
   \node[T] () at (2,0) {};   \node[T] () at (2,1) {};               
                   \node () at (1.3,-.35) {$s_3^\prime$};               
    \node () at (2.3,1.65) {$s_3$};  
        \node[label= left:$A$]()at(0,0){};
             
     	\end{tikzpicture}	
	
\end{center}
	\caption{Shifting $s_3$ into mate $s_3^\prime$}
	\label{shift}

	\end{figure}

\section{The escaping lemmas}
\label{proofs}
Let $Q=P_3\Box P_3$ be a $3\times 3$ grid defined as the Cartesian product of two $3$-paths. 
 Let $L=A\cup B$, where $A=A(3)$ and $B=B(3)$ are the last row and the last column of the vertex matrix
 $\{(i,j)\mid 1\leq i,j\leq 3\}$.
 Let $T\subset Q$
be a set of at most eight distinct  terminals, partitioned into at most four terminal pairs  and singletons with no pairs.

 \begin{lemma}
\label{heavy78}
Assume that $T$ contains either $4$ terminal pairs or $3$ terminal pairs plus one singleton terminal.
Then there is
 a linkage for $2$ or more pairs in $Q$, and there exist edge disjoint escape  paths for the unlinked terminals into distinct vertices of $L$
\end{lemma}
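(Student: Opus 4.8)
\emph{Proof idea.}
The plan is a finite case analysis organised around one structural feature of $Q$: the edge cut of size four between the inner $2\times 2$ corner $D=Q\setminus L=\{(1,1),(1,2),(2,1),(2,2)\}$ and $L$, consisting of the edges $(1,2)(1,3)$, $(2,2)(2,3)$, $(2,2)(3,2)$, $(2,1)(3,1)$; call these the \emph{gates}. Two preliminary remarks trim the bookkeeping. First, the diagonal reflection of $Q$ interchanges $A$ and $B$ and fixes $L$, so the position of the unique free vertex (when $\|Q\|=8$), or of the two free vertices and the singleton (when $\|Q\|=7$), may be assumed to lie in a short list of representatives; note also that at least $\|Q\|-5$ vertices of $D$ are terminals. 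Second, $L$ is the path $(3,1),(3,2),(3,3),(2,3),(1,3)$; hence a terminal already on $L$ escapes along a subpath of $L$ using no gate, a pair with both endpoints on $L$ is linked along a subpath of $L$ using no gate, and one pair with both endpoints in $D$ is linked inside the $4$-cycle $D$ using no gate (if two pairs of $D$ must be linked together they are gate-free exactly when they are the two ``parallel'' pairs, while two ``diagonal'' pairs cost one detour through $(3,1),(3,2)$, i.e.\ two gates). Consequently a gate is \emph{needed} only by the escape path of a terminal of $D$ and by the linkage path of a pair meeting both $D$ and $L$.

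Given a configuration I would (i) choose two pairs to link so as to minimise the total number of gates their linkage consumes, breaking ties to leave the escape routes the most room; (ii) realise those two linkages by short subpaths of $L$, by a frame $[C,x]$ with $C$ the inner $4$-cycle or the outer $8$-cycle, or by an AA- or AB-clip as in Fig.~\ref{clips}, whichever fits; (iii) escape the remaining at most four terminals by leaving each terminal of $L$ where it is and pushing each terminal of $D$ across a still-unused gate to the nearest still-available vertex of $L$, invoking weak $2$-linkedness (Lemma~\ref{w2linked}) for any pair of escape routes that seems to clash; and (iv) if both vertices of $B\setminus A=\{(1,3),(2,3)\}$ are still occupied by mates at the end, shift one of them out along $L$ into a vertex of $L$ that is free or that the linkage has vacated, which exists because the linkage chosen is short and $\|Q\|\le 8$. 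Edge-disjointness of all these paths, which may freely share vertices, is then verified configuration by configuration. In the few configurations where the best two-pair linkage still blocks every escape, one links a third pair instead and escapes only the two remaining terminals; the statement asks for ``$2$ or more'' linked pairs precisely so as to permit this.

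The main obstacle is the \emph{tight} family of configurations in which $D$ is entirely occupied by terminals and the pairing is a perfect matching between $D$ and $L$: there the two linkage paths consume at least two gates and the escape paths of the two unlinked $D$-terminals consume at least two more, so all four gates are in use and there is no slack. For these I would have to exhibit, for every pairing, an explicit assignment of the four gates to the four demands together with a realisation of the four induced portions on $L$ as edge-disjoint subpaths of the path $L$; this is a small Hall-type matching problem followed by a routing check, and I expect it to break into a handful of sub-cases according to which two vertices of $D$ are left unlinked and where on $L$ their mates sit, with the frame and clip constructions packaging several sub-cases at once, since fitting a clip or a frame edge-disjointly alongside a pre-chosen family of escape routes yields two linked pairs and two of the escapes at one stroke. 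The case $\|Q\|=7$ is strictly easier: the second free vertex supplies either an extra unused gate or an extra vacant target on $L$, so the same scheme runs with room to spare, and the singleton is escaped just like an unlinked terminal at its location.
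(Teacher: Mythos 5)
Your outline correctly isolates the structural bottleneck---the four-edge cut between the inner square $S=Q-L$ and $L$, and the tight family in which all four vertices of $S$ are terminals matched into $L$---but as written it is a plan, not a proof: every step that carries the actual content is deferred. Step (i) asserts without justification that a gate-minimising choice of two pairs always leaves realisable escape routes; step (iii) invokes Lemma~\ref{w2linked} ``for any pair of escape routes that seems to clash,'' which is not a verification; and for the tight family you say explicitly that you ``would have to exhibit, for every pairing, an explicit assignment of the four gates to the four demands together with a realisation''---that is, you name the hard part and stop. The gate count shows only that four demands must share four gates; it does not show the induced routing is feasible. In particular $(1,1)$ has no gate, so any path from it must first traverse an edge of the $4$-cycle $S$, and that edge may be needed by a linkage inside $S$ or by another escape---an interaction the cut argument does not see. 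The paper's treatment of exactly this case ($\|S\|=4$ with no pair inside $S$) is where the work lies: it builds a frame $[C,x]$ or $[C,y]$ on an inner $4$- or $6$-cycle, resolves conflicts when a designated exit $z$ or $v$ coincides with $t_3$ or $t_4$ by linking an extra pair, and still needs one genuinely ad hoc configuration ($t_2=(3,2)$, $t_3=(2,3)$, Fig.~\ref{2p3mSfull}(iv)) where the generic construction fails and a bespoke routing is exhibited. Nothing in your sketch predicts or disposes of that exceptional configuration, and ``a small Hall-type matching problem followed by a routing check'' is precisely the claim that needs proof.

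Two of your preliminary reductions are also off. First, step (iv) addresses the constraint that $B\setminus A$ retain at most one exit, which is not part of this lemma's conclusion (it belongs to Lemmas~\ref{heavy6} and~\ref{heavy5}); here the unlinked terminals only need distinct exits anywhere on $L$. Second, two pairs lying entirely on $L$ cannot always be linked by edge-disjoint subpaths of the path $L$ (their subpaths may overlap on an edge), so even that ``gate-free'' bookkeeping entry is wrong: the paper's own Case a with $s_1,s_0\in S$ links $\pi_2,\pi_3\subset L$ only by detouring through $(2,2)$, i.e.\ by spending gates on a pair your accounting charges zero. To turn the proposal into a proof you must carry out the full case analysis; the paper does so by splitting on $\|S\|\in\{2,3,4\}$ and on how many terminals of $S$ are paired within $S$, which is a finer and more checkable decomposition than gate-minimisation.
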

\begin{proof}  Set 
$S=Q-L$, for the $2\times 2$ square of $Q$. We have
$\|Q\|=8$ with $\pi_i\subset Q$, $1\leq i\leq 4$, or $\|Q\|=7$
with $\pi_i\subset Q$, $1\leq i\leq 3$, and $s_0\in Q$ is the singleton.

Case a:  $\|S\|=2$. If $S$ contains a pair, say $\pi_1\subset S$, then there is an $s_1,t_1$-path edge disjoint from $L$. The remaining terminals are in $A\cup B$ where there is another linkage for a second pair.
 If  none of the two terminals in $S$ is a singleton, say $s_1,s_2\in S$, then there is a linkage for $\pi_1,\pi_2$ in  $Q$ which is  $2$-path-pairable, by Lemma \ref{w2linked}.
The remaining terminals are in $L$, thus we are done. 

Assume now that $s_1,s_0\in S$. Let $H$ be the subgraph induced by $L+\cup\{(2,2)\}$, let $\overline{H}$ be the complement of $H$. Observe  that $\pi_2,\pi_3\subset L$ has a linkage $P_2,P_3\subset H$, furthermore, $\overline{H}$ extends intto a clip to escape the two
terminals in $S$  (see Fig.\ref{S2single} (i) and (ii)).

	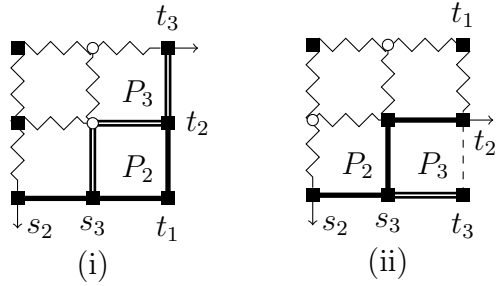
\begin{figure}[htp]
  \tikzstyle{T} = [rectangle, minimum width=.1pt, fill, inner sep=2.5pt]
\tikzstyle{B} = [circle, draw=black!, minimum width=1pt, fill=white, inner sep=1.5pt]
\tikzstyle{txt}  = [circle, minimum width=1pt, draw=white, inner sep=0pt]
\tikzstyle{Wedge} = [draw,line width=.7pt,-,black!100]
\tikzstyle{M} = [circle, draw=black!, minimum width=1pt, fill=white, inner sep=1pt]
\begin{center}
\begin{tikzpicture}     
     \draw[line width=2pt] (0,0) -- (2,0) -- (2,1);
     \draw[double,line width=1pt]  (1,0)--(1,1)--(2,1)--(2,2); 
   \draw[snake]  (0,2) -- (0,0.1);\draw[->] (0,0)--(0,-.4);
     \draw[snake]  (0,2) -- (1.9,2);   \draw[->] (2,2) -- (2.4,2);
     \draw[snake]  (0,1) -- (1,1) (1,1)--(1,2);
 
   \foreach \x in {0,1,2} \foreach \y in {0,1,2} \node[B] () at (\x,\y) {}; 
   \node[T,label=below:$t_1$] () at (2,0) {};   
    \node[T,label=right:$t_2$]() at (2,1){};
    \node[T,label=above:$t_3$] () at (2,2) {};     
    \node[T,label=below:$s_3$] () at (1,0) {};             
     \node[T] (s1) at (0,1) {};       \node[T] (s1) at (0,0) {};    
        \node[T] (s0) at (0,2) {};  
            \node[txt] () at (.3,-0.4) {$s_2$};                         
                \node[txt] () at (1.6,.4) {$P_2$};      
                \node[txt] () at (1.6,1.4) {$P_3$};   
       \node[txt]() at (1,-.9){(i)};         
     	\end{tikzpicture}
	\hskip1truecm
	\begin{tikzpicture}    
     \draw[line width=2pt]   (0,0)--(1,0)--(1,1)--(2,1); 
     \draw[double,line width=1pt]  (1,0) -- (2,0);
 
        \draw[dashed] (2,0) -- (2,1);
   \draw[snake]  (0,2) -- (0,0.1);\draw[->] (0,0)--(0,-.4);
     \draw[snake]  (0,2) -- (2,2) (2,2)--(2,1.1);   
     \draw[->] (2,1) -- (2.4,1);
     \draw[snake]  (0,1) -- (1,1) (1,1)--(1,2);
 
   \foreach \x in {0,1,2} \foreach \y in {0,1,2} \node[B] () at (\x,\y) {}; 
   \node[T,label=below:$t_3$] () at (2,0) {};   
    \node[T]() at (2,1){}; \node () at (2.3,.7) {$t_2$}; 
    \node[T,label=above:$t_1$] () at (2,2) {};     
    \node[T,label=below:$s_3$] () at (1,0) {};             
     \node[T] (s1) at (0,2) {};       \node[T] (s1) at (0,0) {};    
        \node[T] () at (0,2) {};    \node[T] () at (1,1) {};  
            \node[txt] () at (.3,-0.4) {$s_2$};                         
                \node[txt] () at (1.6,.4) {$P_3$};      
                \node[txt] () at (.6,.4) {$P_2$};  
      \node[txt]() at (1,-.9){(ii)};            
     	\end{tikzpicture}

\caption{$s_0\in S$ is a singleton}
	\label{S2single}
		\end{center}
	\end{figure}

Case b: $\|S\|=3$. If $s_1,t_1,s_2\in S$ 
then  the $2$-path-pairable $Q$ has a linkage  for $\pi_1,\pi_2$, and the remaining terminals are on $L$.
Let  $s_1,t_1,s_0\in S$ and $s_0=(i,j)$, $1\leq i,j\leq 2$. Take an $s_1,t_1$-path $P_1\subset S$ not containing $s_0$ and mate $s_0$ into $(3,j)$ along $B(j)$.  If $(3,j)$ is not a terminal or $(3,j)\in \pi_2$, then let $P_2\subset L$ be a linkage for $\pi_2$ (see Fig.\ref{S3single} (i)). 
 
 	\begin{figure}[htp]
\tikzstyle{T} = [rectangle, minimum width=.1pt, fill, inner sep=2.5pt]
\tikzstyle{B} = [circle, draw=black!, minimum width=1pt, fill=white, inner sep=1.5pt]
\tikzstyle{txt}  = [circle, minimum width=1pt, draw=white, inner sep=0pt]
\tikzstyle{Wedge} = [draw,line width=.7pt,-,black!100]
\tikzstyle{M} = [circle, draw=black!, minimum width=1pt, fill=white, inner sep=1pt]
\begin{center}

\begin{tikzpicture}
   \node[txt] (origo) at (0,-.7){};
      \draw[snake] (1,2) --  (1,0.1);\draw[->] (1,0)--(1,-.4);
      \draw[dashed] (1,0)--(0,0) -- (0,1) (0,2)-- (2,2)-- (2,1)--(1,1);
   \draw[line width=2pt] (1,0) -- (2,0) -- (2,1) (0,2)--(0,1)--(1,1);  
                 
                    \node[T]() at (2,1){};
                    \node[T] () at (2,2) {};  
                    \node[T] () at (1,1) {}; 
                   \node[B] () at (1,2) {}; 
                  \node[B] () at (0,0) {};          
                  \node[T] () at (2,0) {};    
              \node[T] () at (0,2) {};  
                  \node[B] () at (0,1) {};
              \node[T,label=above:$s_1$] (s1) at (0,2) {};              
  
        \node[T,label=above:$s_0$] (s0) at (1,2) {};
        \node[txt] () at (.7,-0.3) {$s_2$};      
         \node[T] (s2) at (1,0) {};  
         
          \node[T,label=right:$t_2$] (t2) at (2,1) {};       
       \node[txt] () at (1.25,.75) {$t_1$};    
    \node[txt] () at (.35,1.35) {$P_1$};   
    \node[txt] () at (1.65,.35) {$P_2$}; 
     
             \node[txt]() at (1,-.9){(i)};     
     	\end{tikzpicture}
\hskip.5truecm
		\begin{tikzpicture}
 \draw[dashed]   (1,1) -- (0,1) (0,2)--(2,2);
   \draw[line width=2pt]    (2,0) -- (2,1) -- (1,1)--(1,0)--(2,0);
   \draw[->,line width=1pt] (2,2) -- (2,1.1);
      \draw[snake] (0,2)--(0,0.1) (0,0)--(.9,0); 
      \draw[->] (1,0) -- (1,-.4);
        \draw[->,line width=1pt]  (1,2)--(1,1.1);   
           
              \node[T] () at (1,1) {};     \node[T] () at (2,1) {};     
              \node[T] () at (1,0) {};
              \node[T] () at (2,0) {};                
               \node[T,label=below:$s_4$] () at (0,0) {};
                  \node[B] () at (0,1) {};   
                  
            \node[T,label=above:$s_3$] (s3) at (0,2) {};                                    
            \node[T,label=above:$s_1$] (s1) at (1,2) {};           
              \node[T,label=above:$t_2$] (t2) at (2,2) {};     
                   \node[txt] () at (1.3,-.3) {$t_1$};               
               \node[txt] () at (1.25,.75) {$x$};  
                    \node[txt] () at (.7,1.3) {$s_2$}; 
                \node[txt] () at (1.65,.35) {\bf C};     
   
       \node[txt]() at (1,-.9){(ii)};  
     	\end{tikzpicture}	
		\hskip.5truecm
			\begin{tikzpicture}
		 
   \draw[dashed]   (1,1) -- (1,0);
   \draw[line width=2pt]    (2,0) -- (2,1) -- (0,1)--(0,0)--(2,0);
      \draw[->,line width=1.2pt] (2,2) -- (2,1.1); 
       \draw[->,line width=1.2pt] (1,2) -- (0,2)-- (0,1.1);
        \draw[snake] (1,1)--(1,2) --(1.9,2); \draw[->]  (2,2)--(2.4,2);   
           
              \node[T] () at (1,1) {}; \node[B] () at (0,2) {};
              \node[T] () at (1,0) {};
              \node[B] () at (2,0) {};
              \node[B] () at (0,2) {};        \node[T] () at (2,2) {};           
               \node[T] () at (0,0) {};
                
                \node[T,label=left:$s_2$] (s2) at (0,1) {};                                  
            \node[T,label=above:$s_1$] (s1) at (1,2) {};           
              \node[T,label=above:$z$] (t1) at (2,2) {};    
               \node[T,label=right:$v$] () at (2,1) {};    
                   \node[txt] () at (2.25,1.75) {$t_2$};  
               \node[txt] () at (1.25,.75) {$s_0$};  
                    \node[txt] () at (.25,.75) {$y$};  
                \node[txt] () at (.5,.35) {\bf C};     
    \node[txt] () at (.4,1.65) {$P_{12}$};       
    \node[txt] () at (1.4,1.65) {$P_{0}$}; 
        \node[txt]() at (1,-.9){(iii)};  
     	\end{tikzpicture}
		\hskip.3truecm	
		\begin{tikzpicture}
		 
   \draw[dashed]   (1,1) -- (1,0) (0,1) -- (0,2);
   \draw[line width=2pt]    (2,0) -- (2,1) -- (0,1)--(0,0)--(2,0);
     \draw[double, line width=1pt] (2,2) -- (2,1);
       \draw[->,line width=1.2pt] (0,2) -- (1,2)-- (1,1.1);
        \draw[snake] (1,2) --(1.9,2); \draw[->]  (2,2)--(2.4,2);   
           
              \node[T] () at (1,1) {}; \node[B] () at (0,1) {};
              \node[B] () at (1,0) {};
              \node[T] () at (2,0) {};
              \node[B] () at (0,2) {};        \node[T] () at (2,2) {};           
               \node[T] () at (0,0) {};
                
                \node[T,label=left:$s_2$] (s2) at (0,2) {};                                  
            \node[T,label=above:$s_0$] (s2) at (1,2) {};           
              \node[T,label=above:$z$] (z) at (2,2) {};    
               \node[T,label=right:$t_3$] (t4) at (2,1) {};    
        \node[txt] () at (2.3,1.75) {$s_3$};  
               \node[txt] () at (1.25,.75) {$s_1$};  
                    \node[txt] () at (.75,.75) {$y$};  
                \node[txt] () at (.5,.35) {\bf C};     
    \node[txt] () at (.6,1.65) {$P_{12}$};   
     \node[txt] () at (1.65,1.4) {$P_3$};   
        \node[txt]() at (1,-.9){(iv)};  
     	\end{tikzpicture}	
\caption{$\|S\|=3$}
	\label{S3single}
		\end{center}
	\end{figure}
	Let $s_1,s_2,s_3\in S$ and let $C=Q-(A(1)\cup B(1))$.  Assume that $s_1,s_2$ are the closest terminals  in $S$ to $x=(2,2)$. We define a 
  framing $[C,x]\subset Q$, by mating $s_1, s_2\in S$ into $x\in C$. Terminals $t_1,t_2\notin C$ can be shifted
  onto the cycle $C$. Observe that $[C,x]$ is edge disjoint
  from the path $A(1)\cup B(1)$, and $s_3\in A(1)\cup B(1)$. 
  Thus $s_3$ can escape either at $(1,3)$ or at $(3,1)$. If neither of these vertices is free, then $\pi_4=\{(1,3),(3,1)\}$ and hence  the  neighbors $(3,2)$ or $(2,3)$ can serve as an exit for $s_3$
  (see Fig.\ref{S3single}(ii)).
  
 Some of the previous solutions work when $s_3$ is replaced with the singleton $s_0$. In particular, for $s_0,s_1,s_2\in S$
 it is enough to consider the cases where $s_0\neq(1,1)$ (since otherwise,  $s_1,s_2$ are closest to $(2,2)$, and either singleton or not, the third terminal in $S$ escapes as above).
 
 By symmetry, we may assume that $s_0\in B(2)$, furthermore,
 if $s_0=(2,2)$, then $s_1=(2,1)$. Observe that
either $s_1$ or $s_2$ are in $C$. 
 Let $P_0\subset B(2)\cup A(1)$ be the (unique) path from $s_0$ to $z=(1,3)$,  and let $C$ be the $6$-cycle spanning $Q-A(1)$. Define $P_{12}\subset S$ to be
 the $s_1,s_2$-path  edge disjoint from $P_0\cup C$.
 Thus we obtain a framing $[C,y]$ with
 $y\in \{(2,1),(2,2)\}$ and the mating path $P_{12}$. If $z$ is a terminal, then shift it to $v=(2,3)$ thus clearing $z$ for escaping $s_0$.  If $z\in\pi_3$
 then either it can be linked to $v\in\pi_3$, or 
 $v\in \pi_j$, $j=1,2$, thus $v$ is free or it becomes free due to $P_j$  (see Fig.\ref{S3single} (iii) and (iv)).\\

Case c: $\|S\|=4$. If $S$ contains two pairs, say $\pi_1,\pi_2\subset S$, then there is a linkage for them in $Q$, and the remaining terminals are in $L$. 

Let   $s_1,t_1,s_2,s\in S$, where $s=s_0$ or $s_3$.
We take an $s_1,t_1$-path $P_1\subset S$ such that
$(1,1)$ is not an interior vertex of $P_1$.
Let $P\subset Q$ be an $s_2,s$-path containing $t_2\in L$ and
edge disjoint from $P_1$. Then the $s_2,t_2$-subpath $P_2\subset P$
is a linkage for $\pi_2$, and the $s,t_2$-subpath of $P$ is an escape path for $s$
(see Fig.\ref{2p4m} (i) and (ii)).

	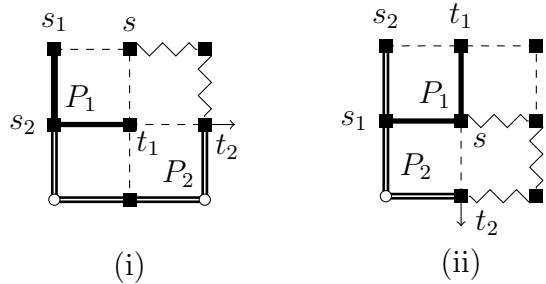
\begin{figure}[htp]
\tikzstyle{T} = [rectangle, minimum width=.1pt, fill, inner sep=2.5pt]
\tikzstyle{B} = [circle, draw=black!, minimum width=1pt, fill=white, inner sep=1.5pt]
\tikzstyle{txt}  = [circle, minimum width=1pt, draw=white, inner sep=0pt]
\tikzstyle{Wedge} = [draw,line width=.7pt,-,black!100]
\tikzstyle{M} = [circle, draw=black!, minimum width=1pt, fill=white, inner sep=1pt]
\begin{center}
\begin{tikzpicture}
      \draw[snake] (1,2) --  (1.9,2) (2,2)--(2,1.1);
      \draw[->] (2,1)--(2.4,1);
      \draw[dashed] (1,1)-- (2,1) (0,2)-- (1,2)--(1,0);
   \draw[double,line width=1pt] (0,2)--(0,0)--(2,0) --(2,1);
   \draw[line width=2pt]  (0,2)--(0,1)--(1,1);  
                 
                    \node[T]() at (2,1){};
                    \node[T] () at (2,2) {};  
                    \node[T] () at (1,1) {}; 
                   \node[B] () at (1,2) {}; 
                  \node[B] () at (0,0) {};          
                  \node[B] () at (2,0) {};    
              \node[T] () at (0,2) {};  
                  \node[T,label=left:$s_2$] () at (0,1) {};
              \node[T,label=above:$s_1$] (s1) at (0,2) {};              
  
        \node[T,label=above:$s$] (s) at (1,2) {};
        \node[txt] () at (2.3,.7) {$t_2$};      
         \node[T] (s2) at (1,0) {};  
               
       \node[txt] () at (1.25,.75) {$t_1$};    
    \node[txt] () at (.35,1.35) {$P_1$};   
    \node[txt] () at (1.65,.35) {$P_2$}; 
     
             \node[txt]() at (1,-.9){(i)};     
     	\end{tikzpicture}
\hskip1truecm
\begin{tikzpicture}
\draw[snake] (1,1) --  (1.9,1) (2,1)--(2,.1) (2,0)--(1.1,0);
      \draw[->] (1,0)--(1,-.4);
      \draw[dashed] (1,0)-- (1,1) (0,2)-- (2,2)--(2,1);
   \draw[double,line width=1pt] (0,2)--(0,0) -- (1,0);
   \draw[line width=2pt]  (0,1)--(1,1)--(1,2);  
                 
                    \node[T]() at (2,1){};
                    \node[T] () at (2,2) {};  
                    \node[T] () at (1,1) {}; 
                   \node[B] () at (1,2) {}; 
                  \node[B] () at (0,0) {};          
                  \node[T] () at (2,0) {};    
              \node[T] () at (0,2) {};  
                  \node[T,label=left:$s_1$] (s1) at (0,1) {};
              \node[T,label=above:$s_2$] (s2) at (0,2) {};              
  
        \node[T,label=above:$t_1$] (t1) at (1,2) {};    
         \node[T] (t2) at (1,0) {};  
                      
       \node[txt] () at (1.35,-.35) {$t_2$};    
       \node[txt] () at (1.25,.75) {$s$};    
    \node[txt] () at (.65,1.35) {$P_1$};   
    \node[txt] () at (.4,.4) {$P_2$}; 
     
       \node[txt]() at (1,-.9){(ii)};  
     	\end{tikzpicture}	
\caption{$\|S\|=4$, $S$ contains a pair}
	\label{2p4m}
		\end{center}
	\end{figure}
We left with the case when $s_1,s_2,s_3,s\in S$, where
$s=s_0$ or $s_4$. 

If $x=(2,2)$ is not a singleton, say $s_1=x$, then w.l.o.g. assume that $s_2=(2,1)$. We define a frame a frame $[C,x]$ for 
$\pi_1,\pi_2$ with the $4$-cycle  $C=Q-(A(1)\cup B(1))$ and by shifting $s_2$ to $x$. 
 We mate the terminal at $(1,2)$ to $z=(1,3)$, and 
 the terminal at $(1,1)$ to $v=(3,1)$, furthermore,
 we shift $z,v$ to $z^\prime,v^\prime\in C$, respectively. 
 Observe that a conflict occurs if $\{z,z^\prime\}=\{t_3,t_4\}$ which can be resolved by a linkage $P_3$ or $P_4$, since in this case $s=s_3$ and thus $(1,2)\in\{s_3,s_4\}$. The same is true for
 handling the conflict $\{v,v^\prime\}=\{t_3,t_4\}$
  (see in Fig.\ref{2p3mSfull} (i) and (ii)).

	\begin{figure}[htp]
\tikzstyle{T} = [rectangle, minimum width=.1pt, fill, inner sep=2.5pt]
\tikzstyle{B} = [circle, draw=black!, minimum width=1pt, fill=white, inner sep=1.5pt]
\tikzstyle{txt}  = [circle, minimum width=1pt, draw=white, inner sep=0pt]
\tikzstyle{Wedge} = [draw,line width=.7pt,-,black!100]
\tikzstyle{M} = [circle, draw=black!, minimum width=1pt, fill=white, inner sep=1pt]
\begin{center}
\begin{tikzpicture}
     
\draw[dashed]  (0,2) -- (1,2) -- (1,1);
   \draw[line width=2pt]  (1,1) -- (2,1) -- (2,0) -- (1,0) -- (1,1);
    \draw[snake]  (1,2) --  (1.9,2) ;\draw[->](2,2)--(2.4,2);
      \draw[->,line width=1pt] (0,0) -- (.9,0);
       \draw[snake] (0,2) -- (0,0.1);\draw[->](0,0)--(0,-.4);
       \draw[->,line width=1pt] (0,1) -- (.9,1);
  \draw[->,line width=1pt] (2,2) -- (2,1.1)  ;
 
  \node[B,label=right:$z^\prime$]() at (2,1){};   
   
  \foreach \x in {0,1,2}\foreach \y in {0,1,2} \node[B]() at (\x,\y){};     
        \node[T,label=above:$s$] (s) at (1,2) {};
        \node[T,label=left:$s_2$] (s2) at (0,1) {};
             \node[T,label=above:$s_3$] (s3) at (0,2) {};
              \node[T,label=left:$v$] (v) at (0,0) {};      
  \node[T,label=above:$z$] () at (2,2) {};
                   \node[txt] () at (1,-.25) {$v^\prime$};
        \node[T] () at (1,1) {};           \node[T] () at (2,1) {};    
\node[txt] () at (1.3,.75) {$x$};
\node[txt] () at (.75,1.25) {$s_1$};
 \node[txt]() at (1.6,.4){$C$};
  
          \node[txt]() at (1,-.9){(i)};   
     	\end{tikzpicture}
	\begin{tikzpicture}
     
\draw[dashed]  (0,2) -- (1,2) -- (1,1);
   \draw[line width=2pt]  (1,1) -- (2,1) -- (2,0) -- (1,0) -- (1,1);
      \draw[->,line width=1pt] (0,0) -- (.9,0);
       \draw[snake] (0,2) -- (0,0.1);\draw[->](0,0)--(0,-.4);
       \draw[->,line width=1pt] (0,1) -- (.9,1);
  \draw[line width=1pt] (2,2) -- (2,1.1)  ;
 \draw[double,line width=1pt] (1,2)--(2,2)--(2,1);
 
  \node[B,label=right:$t_4$]() at (2,1){};   
   
  \foreach \x in {0,1,2}\foreach \y in {0,1,2} \node[B]() at (\x,\y){};     
        \node[T,label=above:$s_4$] (s4) at (1,2) {};
        \node[T,label=left:$s_2$] (s2) at (0,1) {};
             \node[T,label=above:$s_3$] (s3) at (0,2) {};
              \node[T,label=left:$t_1$] (t1) at (0,0) {};      
  \node[T,label=above:$z$] () at (2,2) {};
                   \node[txt] () at (1,-.25) {$v^\prime$};
        \node[T] () at (1,1) {};     \node[T] () at (1,0) {};        
          \node[T] () at (2,1) {};    
\node[txt] () at (1.3,.75) {$x$};
\node[txt] () at (.75,1.25) {$s_1$};
 \node[txt]() at (1.6,.4){$C$};
   \node[txt]() at (1.6,1.6){$P_4$};
          \node[txt]() at (1,-.9){(ii)};   
     	\end{tikzpicture}
\begin{tikzpicture}
   
   \draw[line width=2pt]  (2,2) -- (2,0)--(1,0)-- (1,2) -- (2,2) ;
    \draw[->,line width=1pt]  (0,2) --  (.9,2) ;   
  \draw[snake] (0,1) -- (0,0.1);\draw[->] (0,0)--(0,-.4);
  \draw[->,line width=1pt] (0,0)--(0.9,0);
    \draw[snake] (1,1) -- (1.9,1);\draw[->] (2,1)--(2.4,1);     
   \draw[dashed]  (0,2) -- (0,1) -- (1,1) ;

  \foreach \x in {0,1,2}\foreach \y in {0,1,2} \node[B]() at (\x,\y){};      
      \node[T]() at (1,1){};  
            \node[T]() at (1,0){};              
                  \node[T]() at (2,1){};               
         \node[txt] () at (2.3,.75) {$t_1$};
        \node[T,label=above:$s_2$] (s2) at (1,2) {};
        \node[T,label=left:$s_3$] (s3) at (0,1) {};
             \node[T,label=above:$s_1$] (s1) at (0,2) {};
         \node[T]()at(0,0){};                 
\node[txt] () at (1.3,1.75) {$y$};
\node[txt] () at (.75,1.255) {$s_0$};
     \node[txt]() at (1.5,.4){$C$};
  
         \node[txt]() at (1,-.9){(iii)};   
     	\end{tikzpicture}
	\begin{tikzpicture}
   
   \draw[line width=2pt]  (0,1) -- (0,0)--(2,0)-- (2,1) ;
    \draw[snake]  (0,2) --  (1.9,2) ;   \draw[->](2,2)--(2.4,2);
    \draw[snake] (1,1) -- (1.9,1);\draw[->] (2,1)--(2.4,1);     
  \draw[dashed]  (0,2) -- (0,1) -- (1,1)  (2,2)--(2,1);
\draw[double,line width=1pt](1,2)--(1,0);
  \foreach \x in {0,1,2}\foreach \y in {0,1,2} \node[B]() at (\x,\y){};      
      \node[T]() at (1,1){};  
            \node[T,label=below:$t_2$]() at (1,0){};              
                  \node[T]() at (2,1){};                       
        \node[T,label=above:$s_2$] (s2) at (1,2) {};
        \node[T,label=left:$s_3$] (s3) at (0,1) {};
             \node[T,label=above:$s_1$] (s1) at (0,2) {};
         \node[T,label=below:$t_1$]()at(0,0){};                 
\node[txt] () at (2.3,.75) {$t_3$};
\node[txt] () at (.75,1.255) {$s_0$};
     \node[txt]() at (1.6,.4){$P_3$};
     \node[txt]() at (.6,.6){$P_2$}; 
         \node[txt]() at (1,-.9){(iv)};   
     	\end{tikzpicture}
\caption{$\|S\|=4$, $S$ contains no pair}
	\label{2p3mSfull}
		\end{center}
	\end{figure}
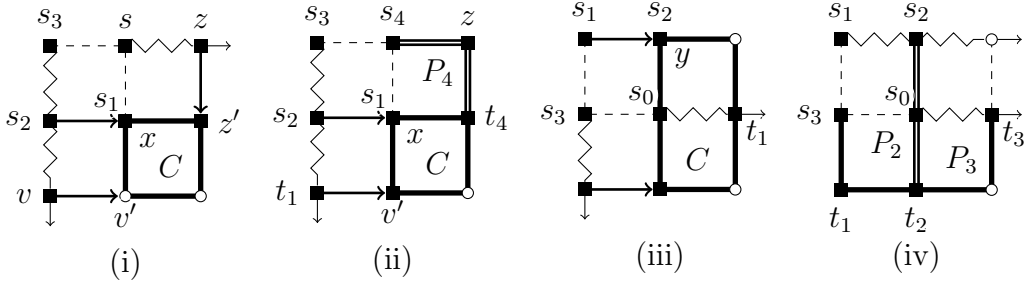

Finally we assume that $s=s_0=(2,2)$. 
Let $s_1=(1,1)$, $s_2=(1,2)$,  $s_3=(2,1)$, and define  a frame $[C,y]$
  for $\pi_1,\pi_2$ with the $6$-cycle on the vertices of
 $Q-B(1)$ and $y=(1,2)$. Then $s_3$ and $s_0$ can escape at
 $(3,1)$ and $(2,3)$, respectively, if $(2,3)\neq t_3$ (see Fig.\ref{2p3mSfull} (iii)). By diagonal symmetry (swapping rows and columns), we may assume that there is a solution if $t_2\neq (3,2)$. Now let $t_2=(3,2)$, $t_3=(2,3)$, and since
 one of the corners $(1,3)$ and $(3,1)$ is terminal free, assume that  $(1,3)\neq t_1$.
 For this case a direct solution is shown in  Fig.\ref{2p3mSfull} (iv).
 \end{proof}

   \begin{lemma}
   \label{heavy6}
If $T$ has $6$ terminals, then there is
 a linkage for one or more pairs in $Q$, and there exist edge disjoint escape paths for the unlinked terminals into distinct exit vertices of $L$
 such that  $B\setminus A$ contains at most one exit.
    \end{lemma}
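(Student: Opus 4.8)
The plan is to argue by cases on $\|S\|$, where $S=Q-L$ is the central $2\times 2$ square, exactly as in the proof of Lemma~\ref{heavy78}, but now carrying along the extra requirement that at most one exit lies in $B\setminus A$. Since $|L|=5$ we have $1\le\|S\|\le 4$, and writing $6=2p+m$ with $p$ the number of pairs contained in $Q$ and $m$ the number of singletons, the bound $m\le 4$ (at most one singleton per terminal pair) forces $p\in\{1,2,3\}$; in particular $Q$ always contains a full pair. All the needed tools are already available: the weak $2$-linkedness of $Q$ and $Q-(3,3)$ (Lemma~\ref{w2linked}), frames $[C,x]$, AA- and AB-clips, and shifting $u\mapsto v$ along $L$. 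Note that $B\setminus A$ has only two vertices while $A$ has three, so ``at most one exit in $B\setminus A$'' means that the exits must essentially be drawn from a $4$-vertex sub-path of $L$: all of $A$ together with one of the two vertices of $B\setminus A$.

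When $p\ge 2$ I would first link two pairs, economically: pick a frame $[C,x]$ for two of the pairs as in Lemma~\ref{heavy78}, with a small cycle $C$ (a $4$- or $6$-cycle inside $Q$) and two short mating paths to $x$, chosen so that $L$, or all but one edge of $L$, stays free; then the remaining two terminals (a third pair if $p=3$, or two singletons if $p=2$) escape into $L$ along short edge-disjoint paths. Using one exit per arm of $L$ keeps $B\setminus A$ within quota, and in the one bad configuration where both remaining terminals sit in $B\setminus A$, one of them is shifted across the corner $(3,3)$ onto a vertex of $A$, which is free because the frame was chosen not to meet $A$.

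When $p=1$ (four singletons and one pair $\pi$) fewer linkages are available but more escapes are needed. Here I would route $\pi$ by a path $P\subset Q$ keeping as much of the boundary $L$ free as possible (ideally through $S$, avoiding $A$), and then escape the four singletons into the $4$-vertex target set described above by edge-disjoint paths, invoking a clip when two singletons lie deep in $S$ and a shift whenever both vertices of $B\setminus A$ carry singletons. The subdivision into cases on $\|S\|$ is the same as in Lemma~\ref{heavy78}: $\|S\|=4$ is handled by a frame on the $4$-cycle $C=Q-(A(1)\cup B(1))$ anchored at $(2,2)$ (using that $A(1)\cup B(1)$ is an edge-disjoint $L$-shaped path carrying the spare terminal out at $(1,3)$ or $(3,1)$); $\|S\|=3$ by a $6$-cycle frame; $\|S\|=2$ by the AA- and AB-clip constructions of Fig.~\ref{clips}; and $\|S\|=1$, where $L$ is saturated with five terminals, by first choosing a linkable pair (one exists since $p\ge 1$) whose endpoints either vacate a vertex of $B\setminus A$ or free an $A$-vertex onto which a surplus $B\setminus A$-terminal can be shifted.

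The main obstacle is the interaction of the $B\setminus A$-constraint with a saturated or nearly saturated $L$, that is, the configurations with $\|S\|\le 2$ in which both vertices of $B\setminus A$ are occupied: one must guarantee either that one of these two terminals lies in a pair linkable inside $Q$ (so it needs no exit at all), or that after the chosen linkage/clip is committed some vertex of $A$ is still free and can be reached from a $B\setminus A$-terminal by a shift along $L$ that crosses no other terminal. Verifying that one of these alternatives always holds, while keeping the linkage, mating, shifting and escape paths pairwise edge-disjoint, is where the bookkeeping concentrates; as in Lemma~\ref{heavy78} I expect a handful of explicit small-grid figures will be needed to certify the tightest subcases.
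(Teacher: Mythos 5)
Your outline correctly assembles the paper's toolkit (clips, shifts, frames, Lemma~\ref{w2linked}) and the same organizing principle (cases on $\|S\|$ and on where the linked pair sits), but it is a plan rather than a proof. For a statement of this kind the entire mathematical content \emph{is} the exhaustive verification of the tight configurations, and you explicitly defer exactly that: ``a handful of explicit small-grid figures will be needed to certify the tightest subcases.'' The paper's proof consists of roughly a dozen such certified configurations (Figs.~\ref{s=4}--\ref{S4}): separate treatments of $\pi_1\subset S$, $\pi_1\subset A$, $\pi_1\subset B$, then $\|S\|=2,3,4$ with none of these holding, each further split by $\|B\setminus A\|\in\{0,1,2\}$ and by whether $(1,1)$ or $(2,2)$ belongs to the linked pair. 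None of these is routine --- for instance the subcase $\|S\|=2$ with $s_3,s_4\in S$ forces all four vertices of $\pi_1\cup\pi_2$ onto $L$ with a single free vertex $w=(3,3)$, and the escape of both singletons subject to the $B\setminus A$ quota requires the specific AA-clips of Fig.~\ref{pinAB}; your sketch does not engage with any configuration at this resolution. So the gap is concrete: the case analysis is named but not performed.

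A secondary point: your count of pairs is off. A singleton in $Q$ is a terminal whose mate lies outside $Q$, so with $p$ pairs and $m$ singletons among the six terminals you have both $2p+m=6$ and $p+m\le 4$ (the four pairs of the ambient problem), whence $p\ge 2$. Your case $p=1$ (one pair plus four singletons) is therefore vacuous, and the paragraph devoted to it --- which is where you placed the frame constructions on the $4$- and $6$-cycles --- addresses configurations that cannot occur. The paper instead starts from the sharp normalization $\pi_1,\pi_2\subset Q$ plus two singletons $s_3,s_4$, and notably does not use frames at all in this lemma, only clips and shifts; the frames belong to Lemma~\ref{heavy78}, where all four pairs are present and two of them must be genuinely linked through the congested center.
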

   \begin{proof}
 We assume w.l.o.g. that $Q$ contains the pairs $\pi_1,\pi_2$, and the other two terminals in $Q$ are singletons, say $s_3$ and $s_4$.  Set $S=Q-L$. We will use clips and shifting 
along $L$ as defined in Section \ref{tools}. \\

  Case a:  $\|S\|=1$. 
  We may assume that $\pi_1\subset L$ and
$s_\ell\in S$ ($2\leq \ell\leq 4$). The linkage $P_1$ in $L$ for $\pi_1$ results in  two free end vertices  $s_1$ and $t_1$ available as exits for escaping. 
Then take a path from $s_\ell$ leading to 
 $(3,1)$ and extend it along $L$  until  the first end vertex of $P_1$ is reached.
If $\pi_1\notin A$, then  $B\setminus A$ contains at most one exit. If  $\pi_1\subset A$
 then  $(1,3),(2,3)$ are both terminals.  Then one of them should escape to exit  at $A$; this can be done by shifting the terminal along  $L$ to the other free end vertex of $P_1$. \\

Case b: $\pi_1\subset S$. There is at least one free vertex  $w\in L$.
If $\|S\|=2$, then we take a
  linkage $P_1$ in $S$ for $\pi_1$; furthermore, in case of $w\notin B\setminus A$
one terminal in $B\setminus A$ will be shifted into $w$ along $L$ to escape there. 

If we have $\pi_2\subset L$, then the linkage $P_2$  in $L$ for $\pi_2$ yields two more free vertices  $s_2,t_2\in L$. In case of $\pi_2\subset A$, let  $t_2$ be the closest endvertex of $P_2$ to $B\setminus A$,
 then the shift $(2,3)\mapsto t_2$ makes $(2,3)$ free.
Thus we may assume that $\pi_2\not\subset L$, w.l.o.g. let
$s_1,t_1,s_2\in S$. 

If $\|S\|=3$, then $L$ contains two free vertices $w_1,w_2\in L$. If $u=(3,1)$ is a terminal  we make $u$ free by the shifting 
$u\mapsto w_1$, where $w_1$ is the closest free vertex to $u$. 
Now there is an escape path in the subgrid $Q-B$ from $s_2$ to $u$, and an edge disjoint linkage $P_1$ in $S$ for $\pi_1$.
If $\|B\setminus A\|=2$, then $w_2\in A$, and the shift $(2,3)\mapsto w_2$ along $L$ makes $(2,3)$ free.

For $\|S\|=4$,  $S$ contains the pair $\pi_1=\{s_1,t_1\}$, and two more terminals  $p,q\in\{s_2, t_2, s_3,s_4\}$.
If $\|B\setminus A\|=2$, then $u=(3,1),v=(3,2)$ and $x=(3,3)$ are free vertices.
First we shift $(2,3)\mapsto x$, then
take a linkage $P_1$  in $S$ for $\pi_1$. Terminals $p,q$ escape $Q$ through free vertices $u,v$. For this purpose we define an AA-clip $CL(u,v)$
not using edges of $P_1$
as shown in Fig.\ref{s=4}. The clip in picture (i) 
is for the cases when $(1,1)\notin\pi_1$ 
; the clip in (ii)  is defined for $(1,1)\in \pi_1$,
finally, picture (ii) shows the mating for the remaining case $\pi_1=\{(1,1),(2,2)\}$. 

\begin{figure}[htp]
 \tikzstyle{T} = [rectangle, minimum width=.1pt, fill, inner sep=2.5pt]
\tikzstyle{B} = [circle, draw=black!, minimum width=1pt, fill=white, inner sep=1.5pt]
\tikzstyle{M} = [circle, draw=black!, minimum width=1pt, fill=white, inner sep=1pt]
\begin{center}
\begin{tikzpicture}
           
\draw[dashed]  (1,1) -- (3,1); 
 \draw[snake]  (3,3) -- (1,3) -- (1,1.1) (3,3) -- (3,2)  --(2,2)-- (2,1.1);
  \draw[->]  (1,1) -- (1,.6); \draw[->]  (2,1) -- (2,.6);
   \draw[->]  (3,1) -- (3,.6);
        \draw[->]  (3,3) -- (3.4,3);
         
   \draw[double] (2,3) -- (2,2) -- (1,2);  
\draw[->,line width=1pt] (3,2) -- (3,1.1);    
        
\foreach \x in {1,2,3}\foreach \y in{2,3} \node[T]() at (\x,\y){};                                
\foreach \x in{1,2} \node[B]() at (\x,1){};
          
\node[B,label=right:$x$]() at (3,1){}; 
\node[T]() at (3,2){}; 
\node[T,label=above:$p$](p) at (1,3){};            
     
          \node() at (2.3,.8){$v$};         \node() at (.7,.8){$u$};                           
            \node() at (1.65,2.35){$\pi_1$};     
      \node() at (2,0){(i)};     
                                
     	\end{tikzpicture}
	\hskip.5cm	
	\begin{tikzpicture}
           
\draw[dashed]  (1,1) -- (3,1) (2,3)--(3,3)--(3,2)--(2,2); 
 \draw[snake]  (2,3) -- (2,1.1)  (2,2)  --(1,2)-- (1,1.1);
  \draw[->]  (1,1) -- (1,.6); \draw[->]  (2,1) -- (2,.6);
     \draw[->]  (3,1) -- (3,.6);
       \draw[->]  (3,3) -- (3.4,3);
        
   \draw[double] (2,3) -- (1,3) -- (1,2);  
       \draw (3,2) -- (3,1);    
        
\foreach \x in {1,2,3}\foreach \y in{2,3} \node[T]() at (\x,\y){};                                
\foreach \x in{1,2} \node[B]() at (\x,1){};
 \node[B]() at (3,1){}; \node[T]() at (3,2){}; 
          
          \node[T,label=above:$s_1$](s1) at (1,3){};  

            \node() at (2.3,1.8){$p$};       
     
          \node() at (2.3,.8){$v$};         \node() at (.7,.8){$u$};                           
            \node() at (1.35,2.65){$\pi_1$};     
      \node() at (2,0){(ii)};     
                                
     	\end{tikzpicture}
	\hskip1cm		
	\begin{tikzpicture}
           
\draw[dashed]  (1,1) -- (3,1) (1,3)--(2,3)--(3,3)--(3,2)--(2,2); 
 \draw[snake]  (2,3) -- (2,1.1)  (1,2)-- (1,1.1);
  \draw[->]  (1,1) -- (1,.6); \draw[->]  (2,1) -- (2,.6);
     \draw[->]  (3,1) -- (3,.6);
      \draw[->]  (3,3) -- (3.4,3);
  
 \draw[line width=2pt] (1,3) -- (1,2) -- (2,2);  
\draw (3,2) -- (3,1);   
        
\foreach \x in {1,2,3}\foreach \y in{2,3} \node[T]() at (\x,\y){};                                
\foreach \x in{1,2} \node[B]() at (\x,1){};
     \node[B]() at (3,1){}; \node[T]() at (3,2){};      
          \node[T,label=above:$s_1$](s1) at (1,3){};  

            \node() at (2.3,1.8){$t_1$};       
     \node[T,label=left:$p$](p) at (1,2){}; 
     \node[T,label=above:$q$](q) at (2,3){};   
     
          \node() at (2.3,.8){$v$};         \node() at (.7,.8){$u$};                           
            \node() at (1.35,2.35){$P_1$};     
      \node() at (2,0){(iii)};     
                                
     	\end{tikzpicture}
		\hskip1cm		
\begin{tikzpicture}
           
\draw[->,line width=1pt]  (1,1) -- (2.9,1); 
 \draw[snake]  (3,3) -- (1,3) -- (1,1.1) (3,3) -- (3,2)  --(2,2)-- (2,1.1);
  \draw[->]  (1,1) -- (1,.6); \draw[->]  (2,1) -- (2,.6);
     \draw[->]  (3,1) -- (3,.6);
    \draw[->]  (3,2) -- (3.4,2);
    
   \draw[double] (2,3) -- (2,2) -- (1,2);  
\draw[dashed] (3,2) -- (3,1);   
        
\foreach \x in {1,2}\foreach \y in{2,3} \node[T]() at (\x,\y){};                                
\foreach \x in{1,2} \node[B]() at (\x,1){};
          \node[B,label=right:$x$]() at (3,1){};           
          \node[T]() at (1,1){};
          \node[B]() at (3,3){};\node[T]() at (3,2){};

\node[T,label=above:$p$](p) at (1,3){};            
     
          \node() at (2.3,.8){$v$};         \node() at (.7,.8){$u$};                           
            \node() at (1.65,2.35){$\pi_1$};     
      \node() at (2,0){(iv)};     
                                
     	\end{tikzpicture}
	\end{center}
	\caption{AA-clips for $\pi_1\subset S$ and $\|S\|=4$}
	\label{s=4}

	\end{figure}
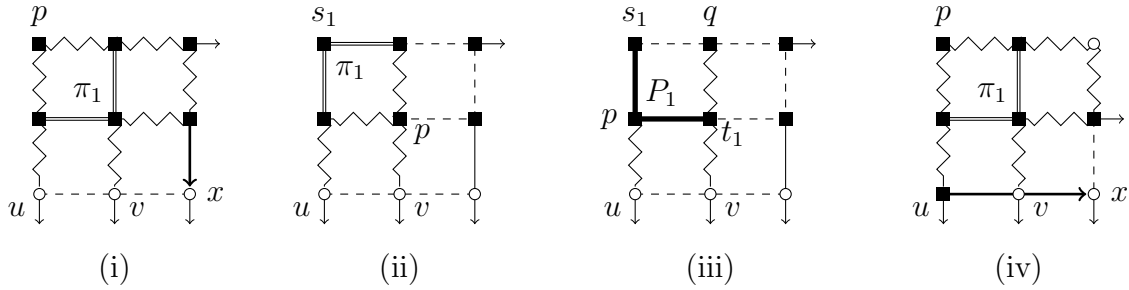

The solutions above can be applied for  $\|B\setminus A\|= 1$. In this case
there is just one terminal in $A$, and if $u$ or $v$ is a terminal, shift it to the free vertex $x$.
As an example, the AA-clip 
 in picture (i) of Fig.\ref{s=4} is repeated in picture (iv). 
  
For $\|B\setminus A\|=0$ there is a free vertex $w\in A$, if $u$ is a terminal, then make it free by the shift $u\mapsto w$.
After taking a linkage $P_1\subset S$ for $\pi_1$ we define AB-clips $CL(u,z)$, where $z=(1,3)$,
as shown in Fig.\ref{s=4AB}. The AB-clip in picture (i) 
is for $p=(1,1)$, the clip in (ii) serves for the cases when $(1,1)\in \pi_1$ and $(2,2)\not\in \pi_1$, picture (iii) is a direct solution
for $\pi_1=\{(1,1),(2,2)\}$.

	\begin{figure}[htp]
 \tikzstyle{T} = [rectangle, minimum width=.1pt, fill, inner sep=2.5pt]
\tikzstyle{B} = [circle, draw=black!, minimum width=1pt, fill=white, inner sep=1.5pt]
\tikzstyle{M} = [circle, draw=black!, minimum width=1pt, fill=white, inner sep=1pt]
\begin{center}
\begin{tikzpicture}
           
\draw[dashed]  (2,1) --(2,2)  (3,1)--(3,2); 
 \draw[snake]  (1,3) -- (1,1.1) (1,3) -- (3,3)  (2,2)--(3,2)--(3,2.9);
  \draw[->]  (1,1) -- (1,.6); \draw[->]  (3,3) -- (3.4,3);
   \draw[->]  (2,1) -- (2,.6);  \draw[->]  (3,1) -- (3,.6);
   \draw[->,line width=1pt]  (1,1)--(2.9,1);
   \draw[double] (2,3) -- (2,2) -- (1,2);  
        
\foreach \x in {1,2}\foreach \y in{2,3} \node[T]() at (\x,\y){};                                
  \node[T]() at (1,1){};  \node[T]() at (2,1){};
           \node[B]() at (3,2){};   
           
\node[B,label=right:$w$]() at (3,1){}; 
\node[T,label=above:$p$](p) at (1,3){};            
  \node[B,label=above:$z$](z) at (3,3){};   
     
                \node() at (.7,.8){$u$};                           
            \node() at (1.65,2.35){$\pi_1$};     
      \node() at (2,0){(i)};     
                                
     	\end{tikzpicture}
	\hskip1cm	
	\begin{tikzpicture}
           
\draw[dashed]  (3,2)--(2,2) -- (2,1)-- (3,1)--(3,3)  ; 
 \draw[snake]  (2,2)--(2,3) -- (2.9,3) (2,2)--(1,2)--(1,1.1);
  \draw[->]  (1,1) -- (1,.6); \draw[->]   (3,3)  --(3.4,3);
    \draw[->]  (2,1) -- (2,.6);   \draw[->]  (3,1) -- (3,.6);
  \draw[->,line width=1pt]  (1,1) -- (1.9,1);
   \draw[double] (2,3) -- (1,3) -- (1,2);  
        
\foreach \x in {1,2}\foreach \y in{2,3} \node[T]() at (\x,\y){};                                

 \node[T]() at (1,1){};  
\node[B]() at (3,2){};  \node[T]() at (3,1){};  
   \node[B,label=above:$z$](z) at (3,3){};            
    \node[T,label=above:$s_1$](s1) at (1,3){};  

   \node() at (2.3,1.8){$p$};       
   \node[B]() at (2,1){};          
         \node() at (.7,.8){$u$};         \node() at (2.3,.8){$w$};                          
            \node() at (1.35,2.65){$\pi_1$};     
      \node() at (2,0){(ii)};     
                                
     	\end{tikzpicture}
	\hskip1cm		
	\begin{tikzpicture}
           
\draw[dashed]  (1,1) -- (3,1)--(3,2)--(2,2) --(2,3)--(1,3) (3,2)--(3,3) (2,1) --(2,2); 
 \draw[snake]  (2,3) -- (2.9,3)  (1,2)-- (1,1.1);
  \draw[->]  (1,1) -- (1,.6); \draw[->]  (3,3) -- (3.4,3);
     \draw[->]  (2,1) -- (2,.6);   \draw[->]  (3,1) -- (3,.6);
 \draw[line width=2pt] (1,3) -- (1,2) -- (2,2);   
        
\foreach \x in {1,2}\foreach \y in{2,3} \node[T]() at (\x,\y){};                                
 \node[B]() at (1,1){};
          
          \node[T,label=above:$s_1$](s1) at (1,3){};  
\node[T]() at (3,1){}; \node[T]() at (2,1){}; \node[B]() at (3,2){}; 
            \node() at (2.3,1.8){$t_1$};       
     \node[T,label=left:$p$](p) at (1,2){}; 
     \node[T,label=above:$q$](q) at (2,3){};   
    \node[B,label=above:$z$](z) at (3,3){};     
        
           \node() at (.7,.8){$u$};                           
            \node() at (1.35,2.35){$P_1$};     
      \node() at (2,0){(iii)};     
                                
     	\end{tikzpicture}

	\end{center}
	\caption{AB-clips for $\pi_1\subset S$ and $\|S\|=4$}
	\label{s=4AB}

	\end{figure}
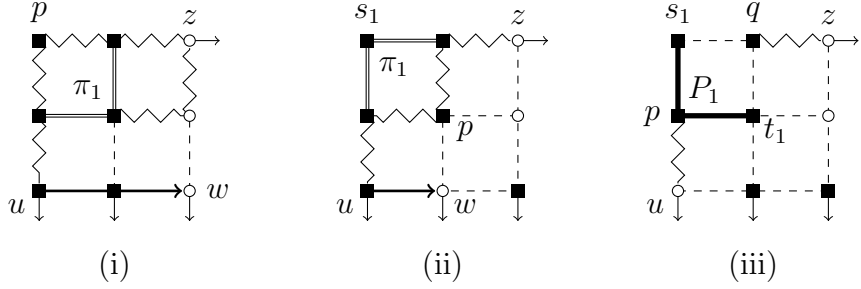

Case c:  $\pi_1\subset A$.  Due to Case a, we may assume that $\pi_2\not\subset S$, in particular, $\|S\|=2$ or $3$. For $\|S\|=2$, there is one free vertex in $L$, say $w$. Let $u=(3,1)$, $v=(3,2)$. 
The (unique) path $P_1\subset A$ is a linkage for $\pi_1$ and makes $s_1,t_1$ free. For $w\in A$, every vertex
becomes free, thus  we can move out $(2,3)$ from $B\setminus A$ by the shift
$(2,3)\mapsto (3,3)$. The (unique) $u,v$-path in
$B(1)\cup A(1)\cup B(2)$ contains $S$ and makes an AA-clip $CL(u,v)$. For $w\in B\setminus A$ no shift is necessary and an $s_1,t_1$-path in $Q$ including $S$ and edge disjoint from $P_1$ defines an AA-clip.

For $\|S\|=3$, $L$ has three terminals and 
two free vertices,
let $s_2,s_3,s_4\in S$, $s_1,t_1,t_2\in L$. 
If $t_2\in B$, then we take a linkage $P_2$ for $\pi_2$ using column $B(3)$ and the row $A(1)$ or $A(2)$ containing
$s_2$. Taking a linkage $P_1\subset A$ for $\pi_1$, vertices  $u$ and $v$ become free, thus an AA-clip $CL(u,v)$ is obtained by taking the union of the paths $B(1)$ and $B(2)$
together with an edge between them not used by $P_2$.
\begin{figure}[htp]
 \tikzstyle{T} = [rectangle, minimum width=.1pt, fill, inner sep=2.5pt]
\tikzstyle{B} = [circle, draw=black!, minimum width=1pt, fill=white, inner sep=1.5pt]
\tikzstyle{M} = [circle, draw=black!, minimum width=1pt, fill=white, inner sep=1pt]
\begin{center}
\begin{tikzpicture}
           
\draw[dashed]  (3,1) --(3,3)  (3,2)--(2,2); 
 \draw[snake]  (1,3)--(2.9,3) (1,3) -- (1,1.1)  (2,2)--(1,2);
 \draw[->]  (1,1) -- (1,.6); \draw[->]  (3,3) -- (3.4,3);
  
   \draw[line width=2pt] (1,1)--(2,1)--(2,3) ;  
   \draw[line width=2pt] (2,1) -- (3,1);
                              
  \node[T]() at (1,1){};  
   \node[T,label=below:$t_1$]() at (3,1){}; 
  \node[T,label=below:$s_1$](s1) at (2,1){};  

       \node[B]() at (3,2){};       \node[B]() at (3,3){};  \node[B]() at (1,2){};    
       \node[B]() at (2,2){};       \node[B]() at (1,3){};       
\node[T,label=above:$s_2$](s2) at (2,3){};   \node() at (.7,.8){$t_2$};     
 \node[B,label=above:$z$](z) at (3,3){};   
                                   
            \node() at (1.65,1.35){$P_2$};     
           \node() at (2.6,1.35){$P_1$};                      
     	\end{tikzpicture}	
	\hskip.5truecm
\begin{tikzpicture}
           
\draw[dashed]  (3,1) --(3,3)  (3,2)--(2,2) (1,3)--(2,3); 
 \draw[snake]  (1,2)--(2,2)--(2,1) -- (1,1.1)  (2,2)--(2,3)--(2.9,3);
 \draw[->]  (1,1) -- (1,.6); \draw[->]  (3,3) -- (3.4,3);
  
   \draw[line width=2pt] (1,1)--(1,3) ;  
   \draw[line width=2pt] (2,1) -- (3,1);
         
\foreach \y in{3} \node[T]() at (1,\y){};  
                              
  \node[T]() at (1,1){}; 
 
  \node[T,label=below:$s_1$](s1) at (2,1){}; 
  \node[T,label=below:$t_1$]() at (3,1){}; 
       \node[B]() at (3,2){};       \node[B]() at (3,3){};  \node[B]() at (1,2){};   
       \node[B]() at (2,2){};       \node[B]() at (2,3){};       
\node[T,label=above:$s_2$](s2) at (1,3){};   \node() at (.7,.8){$t_2$};     
 \node[B,label=above:$z$](z) at (3,3){};   
                                   
            \node() at (1.35,2.5){$P_2$};     
            \node() at (2.6,1.35){$P_1$};                       
     	\end{tikzpicture}	
	\hskip.5truecm
\begin{tikzpicture}
           
\draw[dashed]  (3,1) --(3,2) (1,2)--(1,3); 
 \draw[snake]  (2,2)--(1,2)--(1,1.1) (2,2)--(3,2) -- (3,2.9)  (2,3)--(2.9,3);
 \draw[->]  (1,1) -- (1,.6); \draw[->]  (3,3) -- (3.4,3);
  
   \draw[line width=2pt] (2,1)--(2,3)--(1,3) ;  
   \draw[line width=2pt] (1,1) -- (3,1);
         
\foreach \y in{3} \node[T]() at (1,\y){};  
                              
  \node[T]() at (1,1){};  
  \node[T,label=below:$t_1$]() at (3,1){}; 
  \node[T,label=below:$t_2$](t2) at (2,1){}; 

       \node[B]() at (3,2){};       \node[B]() at (3,3){};  \node[B]() at (1,2){};   
       \node[B]() at (2,2){};       \node[B]() at (2,3){};       
\node[T,label=above:$s_2$](s2) at (1,3){};   \node() at (.7,.8){$s_1$};     
 \node[B,label=above:$z$](z) at (3,3){};   
                                   
            \node() at (1.65,2.65){$P_2$};     
            \node() at (2.45,1.35){$P_1$};                       
     	\end{tikzpicture}

\end{center}
	\caption{$\pi_1\subset A$ and $\|S\|=3$}
	\label{pinA}

	\end{figure}
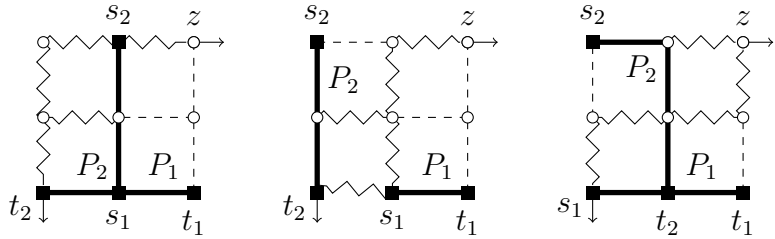

Now we may assume that $t_2\in A\setminus B$, in particular, $B\setminus A$ has no terminal. 
For the different locations of $s_2$ and $t_2$ each solution consists of a linkage for $\pi_1$, 
$\pi_2$, and an AB-clip $CL(u,z)$,
where  $u=(3,1)$ and $z=(1,3)$ (see the examples in Fig.\ref{pinA}). \\

Case d:   $\pi_1\subset B$. We have $\|S\|=2$ or $3$, by Case a. For $\|S\|=2$, let $w$ be the only free vertex in $L$
and take the (unique) path $P_1\subset B$ to be the linkage for $\pi_1$.
If all vertices of $B$ become free, we define an AB-clip $CL((3,3),(1,3))$ 
to escape the two terminals of $S$ into $L$ 
as on the left of Fig.\ref{pinB}. The same clip works for $\pi_1=B\setminus A$, thus we may assume that $t_1=(3,3)$ and $w\notin B$. For this case an AA-clip $CL(w,t_1)$  is shown on the right of Fig.\ref{pinB}.
\begin{figure}[htp]
  \tikzstyle{T} = [rectangle, minimum width=.1pt, fill, inner sep=2.5pt]
\tikzstyle{B} = [circle, draw=black!, minimum width=1pt, fill=white, inner sep=1.5pt]
\tikzstyle{txt}  = [circle, minimum width=1pt, draw=white, inner sep=0pt]
\tikzstyle{Wedge} = [draw,line width=1.5pt,-,black!100]
\tikzstyle{M} = [circle, draw=black!, minimum width=1pt, fill=white, inner sep=1pt]
\begin{center}
	\begin{tikzpicture}
           
\draw[dashed]  (2,1) --(2,3) (2,2)--(3,2); 
 \draw[snake]  (2,2)--(1,2) (1,2)--(1,2.9) (1,3)--(2.9,3) (1,2)--(1,1.1) (1,1)--(2.9,1);
 \draw[->]  (3,3) -- (3.4,3); \draw[->]  (3,1) -- (3,.6);
  
   \draw[line width=2pt] (3,1)--(3,3) ;  
         
\foreach \y in{1,3} \node[T]() at (3,\y){};  
   \foreach \x in{1,2}  \foreach \y in{1,2,3} \node[B]() at (\x,\y){};      
                         
 \node[T,label=right:$t_1$]() at (3,1){};  
  \node[B](w) at (2,1){};          
\node[T,label=right:$$]() at (3,3){};    
\node[T]() at (1,1){};   \node[T]() at (2,1){};  
          \node[B,label=right:$w$](w) at (3,2){};                            
            \node (s1) at (3.25,2.75){$s_1$};   
            \node() at (2.65,1.55){$P_1$};                       
     	\end{tikzpicture}
	\hskip1truecm
	\begin{tikzpicture}
           
\draw[dashed]  (2,3) --(3,3) (1,2)--(3,2); 
 \draw[snake]  (1,3)--(1,1.1) (1,1)--(1.9,1) (1,3)--(2,3)--(2,1.1) (2,1)--(2.9,1);
 \draw[->]  (2,1) -- (2,.6); \draw[->]  (3,1) -- (3,.6);
  
   \draw[line width=2pt] (3,1)--(3,3) ;  
         
\foreach \y in{1,2,3} \node[T]() at (3,\y){};  
   \foreach \x in{1,2}  \foreach \y in{1,2,3} \node[B]() at (\x,\y){};      
                         
 \node[T,label=right:$t_1$]() at (3,1){};  
  \node[B](w) at (2,1){};         \node() at (1.7,.7){$w$};  
\node[T,label=right:$s_1$](s1) at (3,3){};    \node[T](w) at (1,1){};  
                                   
            \node() at (2.65,2.45){$P_1$};                       
     	\end{tikzpicture}

\end{center}
	\caption{$\pi_1\subset B$ and $\|S\|=2$}
	\label{pinB}

	\end{figure}
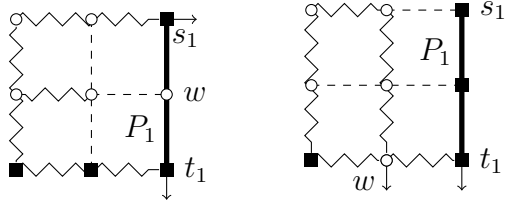

For $\|S\|=3$, 
let $s_2,s_3,s_4\in S$ and $s_1,t_1,t_2\in L$.  Each of he pictures in Fig.\ref{inB} defines two edge disjoint subgraphs of $Q$, one is an AB-clip $CL(u,z)$, where
$u\in \{(3,1),(3,2)\}$ and $z=(1,3)$, the other subgraph contains the edge disjoint paths $P_1\subseteq B$
and $P_2$ linking $\pi_1$ and $\pi_2$.

\begin{figure}[htp]
  \tikzstyle{T} = [rectangle, minimum width=.1pt, fill, inner sep=2.5pt]
\tikzstyle{B} = [circle, draw=black!, minimum width=1pt, fill=white, inner sep=1.5pt]
\tikzstyle{txt}  = [circle, minimum width=1pt, draw=white, inner sep=0pt]
\tikzstyle{Wedge} = [draw,line width=1.5pt,-,black!100]
\tikzstyle{M} = [circle, draw=black!, minimum width=1pt, fill=white, inner sep=1pt]
\begin{center}

		\begin{tikzpicture}
           
\draw[dashed]  (2,1) --(3,1) (2,2)--(3,2); 
 \draw[snake]  (1,2)--(2,2)  (1,3)--(2.9,3) (2,3)--(2,1.1) ;
 \draw[->]  (3,3) -- (3.4,3); \draw[->]  (2,1) -- (2,.6);
  
   \draw[line width=2pt] (3,1)--(3,3) (2,1)--(1,1)--(1,3);  
         
\foreach \y in{1,3} \node[T]() at (3,\y){};  
   \foreach \x in{1,2}  \foreach \y in{1,2,3} \node[B]() at (\x,\y){};      
                         
 \node[T,label=right:$$]() at (3,1){};  
  \node[T]() at (2,1){};     \node[T]() at (1,3){};  
  \node() at (2.3,.7){$u$};   
   \node[B]() at (1,1){};   
 \node[T,label=above:$z$]() at (3,3){};  
          \node[B,label=right:$$](w) at (3,2){};                            
            
            \node() at (3.35,1.45){$P_1$};     \node() at (1.35,1.5){$P_2$};         
             \node() at (2,0.2){(i)};            
     	\end{tikzpicture}
		\hskip.5truecm
		\begin{tikzpicture}
           
\draw[dashed]  (2,1) --(3,1) (2,2)--(3,2); 
 \draw[snake]  (2,2)--(1,2) (1,2)--(1,2.9) (1,3)--(2.9,3) (1,2)--(1,1.1) ;
 \draw[->]  (3,3) -- (3.4,3); \draw[->]  (1,1) -- (1,.6);
  
   \draw[line width=2pt] (3,1)--(3,3) (1,1)--(2,1)--(2,3);  
         
\foreach \y in{1,3} \node[T]() at (3,\y){};  
   \foreach \x in{1,2}  \foreach \y in{1,2,3} \node[B]() at (\x,\y){};      
                         
 \node[T,label=right:$$]() at (3,1){};  
  \node[B](w) at (2,1){};     \node[T]() at (2,3){};      
     \node[T]() at (1,1){};   \node[B]() at (2,1){};  
 \node[T,label=above:$z$]() at (3,3){};  
          \node[B,label=right:$$](w) at (3,2){};  
           \node[T,label=left:$u$](u) at (1,1){};       
                       \node() at (3.35,1.45){$P_1$};     \node() at (1.65,1.5){$P_2$};         
             \node() at (2,0.2){(ii)};            
     	\end{tikzpicture}
	\hskip1truecm	
		\begin{tikzpicture}
\draw[dashed] (2,1)--(3,1) ; 
 \draw[snake]  (1,3)--(1.9,3) (1,2)--(1,1.1) (1,1)--(1.9,1)  (2,3)--(2.9,3) (2,3)--(2,1.1) ;
 \draw[->]  (3,3) -- (3.4,3); \draw[->]  (2,1) -- (2,.6);
  
   \draw[line width=2pt] (3,1)--(3,3) (3,2)--(1,2)--(1,3);  
         
\foreach \y in{1,3} \node[T]() at (3,\y){};  
   \foreach \x in{1,2}  \foreach \y in{1,2,3} \node[B]() at (\x,\y){};      
                         
 \node[T]() at (3,1){};  \node[T]() at (1,3){};  
   \node[B]() at (2,1){};             
  \node[B]() at (1,1){};   \node[B]() at (2,1){};  
 \node[T,label=above:$z$]() at (3,3){};  
          \node[T]() at (3,2){};                  \node() at (2.3,.7){$u$};            
            
            \node() at (3.35,1.45){$P_1$};     \node() at (1.45,2.45){$P_2$};         
             \node() at (2,0.2){(iii)};            
     	\end{tikzpicture}
	\hskip.5truecm	
\begin{tikzpicture}
           
\draw[dashed]  (1,1) --(3,1) (2,2)--(2,1); 
 \draw[snake]  (2,2)--(1,2) (1,2)--(1,2.9) (1,3)--(2.9,3) (1,2)--(1,1.1) ;
 \draw[->]  (3,3) -- (3.4,3); \draw[->]  (1,1) -- (1,.6);
  
   \draw[line width=2pt] (3,1)--(3,3) (3,2)--(2,2)--(2,3);  
         
\foreach \y in{1,3} \node[T]() at (3,\y){};  
   \foreach \x in{1,2}  \foreach \y in{1,2,3} \node[B]() at (\x,\y){};      
                         
 \node[T,label=right:$$]() at (3,1){};  \node[T]() at (2,3){};  
  \node[B](w) at (2,1){};        
      \node[B,label=left:$u$](u) at (1,1){};      
     \node[B]() at (2,1){};  
 \node[T,label=above:$z$]() at (3,3){};    \node[T]() at (3,2){};  
                                             
            \node() at (3.35,1.45){$P_1$};     \node() at (2.45,2.45){$P_2$};         
             \node() at (2,0.2){(iv)};            
     	\end{tikzpicture}
	   
\end{center}
	\caption{$\pi_1\subset B$ and $\|S\|=3$}
	\label{inB}

	\end{figure}
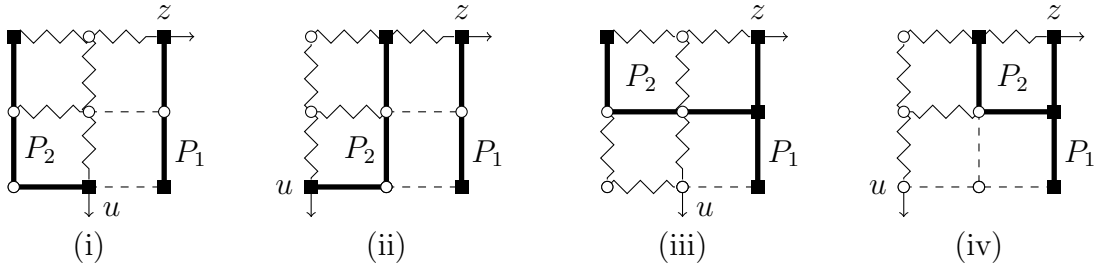 

The two leftmost pictures in Fig.\ref{inB} cover the cases $t_2\in A\setminus B$. For $s_2\in B(1)$, $P_2$ is the  path in $B(1)\cup (A\setminus B)$, see (i); for 
 $s_2\in B(2)$,  $P_2$ is the path in $B(2)\cup (A\setminus B)$, see (ii).
The two rightmost pictures in Fig.\ref{inB} cover the cases $t_2\in B\setminus A$. For $s_2\in B(1)$,  $P_2$ is the unique path in $B(1)\cup A(2) \cup B$, $P_1$, see (iii); for 
 $s_2\in B(2)$, $P_2$ is contained in $B(2)\cup A(2)\cup B$, see (iv).
\\

To finish the proof  we 
discuss $\|S\|=2$, $3$ and $4$ separately, and we assume that none of the cases b, c, and d
 applies.\\

Let $\|S\|=2$. First assume that  $S$ has two singletons, $s_3,s_4\in S$. Then we have $\pi_1,\pi_2\subset L$, 
and by Cases c and d,
we may assume that $s_1=(2,3)$, $t_1=(3,1)$ or $(3,2)$, and the only free vertex of $L$ is $w=(3,3)$. The pictures in Fig.\ref{pinAB} show the linkage  $P_1$ for $\pi_1$ and 
the AA-clip $CL(t_1,w)$ to escape $s_3,s_4$.

\begin{figure}[htp]
  \tikzstyle{T} = [rectangle, minimum width=.1pt, fill, inner sep=2.5pt]
\tikzstyle{B} = [circle, draw=black!, minimum width=1pt, fill=white, inner sep=1.5pt]
\tikzstyle{txt}  = [circle, minimum width=1pt, draw=white, inner sep=0pt]
\tikzstyle{Wedge} = [draw,line width=1.5pt,-,black!100]
\tikzstyle{M} = [circle, draw=black!, minimum width=1pt, fill=white, inner sep=1pt]
\begin{center}
	\begin{tikzpicture}
           
\draw[dashed]  (2,1) --(3,1) (1,2)--(2,2); 
 \draw[snake]  (1,3)--(1,1.1) (1,1)--(1.9,1) (1,3)--(2.9,3) (3,3)--(3,1.1) (2,2)--(2,2.9);
 \draw[->]  (2,1) -- (2,.6); \draw[->]  (3,1) -- (3,.6);
  
  \draw[line width=2pt] (2,1)--(2,2)--(3,2) ;  
         
\foreach \y in{3} \node[T]() at (3,\y){};  
   \foreach \x in{1,2}  \foreach \y in{1,2,3} \node[B]() at (\x,\y){};      
                         
 \node[T,label=right:$s_1$]() at (3,2){};  
  \node[T](t1) at (2,1){};         \node() at (1.7,.7){$t_1$};  

\node[T]() at (3,3){};    \node[T]() at (1,1){};
\node[B,label=right:$w$]() at (3,1){}; 
                                      
            \node() at (2.35,1.65){$P_1$};                       
     	\end{tikzpicture}		
		\hskip1truecm
	\begin{tikzpicture}
           
\draw[dashed]  (2,1) --(3,1) (1,2)--(2,2); 
 \draw[snake]  (1,3)--(1,1.1)  (1,3)--(2.9,3) (3,3)--(3,1.1) (2,2)--(2,2.9);
 \draw[->]  (1,1) -- (1,.6); \draw[->]  (3,1) -- (3,.6);
  
  \draw[line width=2pt] (1,1)--(2,1)--(2,2)--(3,2) ;  
         
\foreach \y in{3} \node[T]() at (3,\y){};  
   \foreach \x in{1,2}  \foreach \y in{1,2,3} \node[B]() at (\x,\y){};      
                         
 \node[T,label=right:$s_1$]() at (3,2){};  
  \node[T]() at (2,1){};      
\node[T]() at (3,3){};    \node[T,label=left:$t_1$]() at (1,1){};    \node[B,label=right:$w$]() at (3,1){};

            \node() at (1.65,1.5){$P_1$};                       
     	\end{tikzpicture}\end{center}
	\caption{$\pi_1,\pi_2\subset A\cup B$ }
	\label{pinAB}
\end{figure}
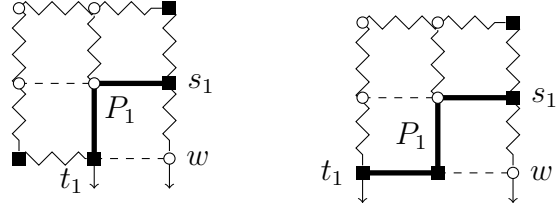

Next we assume that $S$ contains no singleton,
$s_1,s_2\in S$. If  $(3,3)\in\{t_1,t_2\}$, then we shift  $(3,3)\mapsto (3,2)$ even if $(3,2)$
is not a free vertex. Using that 
$Q^* =Q-(3,3)$ is  weakly $2$-linked, by Lemma \ref{w2linked}, there is a linkage in $Q^*$ for  $\pi_1$ and $\pi_2$. If  $B\setminus A=\{s_3,s_4\}$, then 
we need another shift $(2,3)\mapsto (3,3)$ 
to make $(2,3)$ a free vertex (see an example in Fig.\ref{last2} (i)). 
 \begin{figure}[htp]
  \tikzstyle{T} = [rectangle, minimum width=.1pt, fill, inner sep=2.5pt]
\tikzstyle{B} = [circle, draw=black!, minimum width=1pt, fill=white, inner sep=1.5pt]
\tikzstyle{txt}  = [circle, minimum width=1pt, draw=white, inner sep=0pt]
\tikzstyle{Wedge} = [draw,line width=1.5pt,-,black!100]
\tikzstyle{M} = [circle, draw=black!, minimum width=1pt, fill=white, inner sep=1pt]
\begin{center}
		
	\begin{tikzpicture}
 
  \foreach \x in{1,2}  \draw[line width=2pt] (\x,1)--(\x,3)  ;
  \foreach \y in{2,3} \draw[line width=2pt] (1,\y)--(3,\y)  ; 
\draw[line width=2pt]  (1,1)--(2,1) (3,2)--(3,3); 
   
  \draw[->,line width=1pt] (3,2)--(3,1.1);
    \draw[->,line width=1pt] (3,1)--(2.1,1);   
    
\node[B]() at (1,2){};  \node[B]() at (2,2){};   \node[B]() at (2,3){};     
 \node[B]() at (1,3){};  \node[B]() at (1,1){}; 

  \node[T,label=right:$s_4$](s4) at (3,3){}; 
  \node[T,label=above:$s_2$] (s2) at (2,3) {};             

         \node[T,label=below:$t_2$](t2) at (3,1){};       
            \node[T,label=below:$t_1$] (t1) at (2,1) {};   
            \node[T,label=left:$s_1$] (s1) at (1,2) {};          
                 \node[T,label=right:$s_3$] (s3) at (3,2) {};   
               
    \node[txt]() at (1.5,2.5){$Q^*$};   
      \node[txt]() at (2,0){(i)};     
     	\end{tikzpicture}
			\hskip.5truecm
	\begin{tikzpicture}
           
\draw[dashed]  (1,2) --(3,2) (2,3)--(3,3); 
 \draw[snake]  (1,3)--(1,1.1) (1,3)--(1.9,3) (2,3)--(2,1.1)  ;
 \draw[->]  (2,1) -- (2,.6); \draw[->]  (1,1) -- (1,.6);
  
   \draw[line width=2pt] (1,1)--(3,1)--(3,3);  
         
\foreach \y in{1,3} \node[T]() at (3,\y){};  
   \foreach \x in{1,2}  \foreach \y in{1,2,3} \node[B]() at (\x,\y){};      
                         
 \node[T]() at (3,2){};  
  \node[B](w) at (2,1){};          
  \node[T]() at (1,1){};   \node[B]() at (2,1){};  
 \node[T,label=above:$t_2$]() at (3,3){};  
         
           \node[T,label=left:$s_2$](s2) at (1,1){};                        
            \node() at (2.25,.75){$w$};   
           \node() at (2.65,1.35){$P_2$};         
             \node() at (2,0){(ii)};            
     	\end{tikzpicture}
			\hskip.5truecm	
	\begin{tikzpicture}
           
\draw[dashed]  (1,3)--(2,3) --(3,3) --(3,2)--(2,2) (2,1)--(3,1); 
 \draw[snake]  (2,2)--(1,2) (1,2)--(1,2.9)  (1,2)--(1,1.1) ;
  \draw[->]  (1,1) -- (1,.6);
  \draw[->,line width=1pt]  (3,2) -- (3,1.1);
   \draw[line width=2pt] (1,1)--(2,1)--(2,3);  
         
   \foreach \x in{1,2}  \foreach \y in{1,2} \node[B]() at (\x,\y){};      
                         
 \node[T]() at (3,2){};   
  \node[T,label=below:$s_2$]() at (2,1){};          
  \node[T]() at (1,1){};      \node[T,label=above:$s_1$]() at (2,3){};      \node[B]() at (1,3){};    
 \node[T,label=above:$t_2$]() at (3,3){};  
  \node[B,label=right:$w$]() at (3,1){};  
       
           \node[T,label=left:$t_1$](t1) at (1,1){};                   
            \node() at (1.65,1.35){$P_1$};        
             \node() at (2,0){(iii)};            
     	\end{tikzpicture}
			\hskip.5truecm
	\begin{tikzpicture}
           
\draw[dashed]  (1,3)--(1,2) --(1,1) --(2,1) (2,2)--(3,2); 
 \draw[snake]  (1,2)--(1.9,2) (2,3)--(2,1.1) (2,1)--(2.9,1.1) ;
  \draw[->]  (3,1) -- (3,.6);
  
   \draw[line width=2pt] (1,3)--(3,3)--(3,1);  
         
   \foreach \x in{1,2}  \foreach \y in{1,2} \node[B]() at (\x,\y){};      
                         
 \node[B,label=right:$w$]() at (3,2){};   
  \node[T,label=below:$s_2$]() at (2,1){};       \node[T,label=right:$t_1$]() at (3,1){};      
  \node[T,label=below:$s_4$]() at (1,1){};      \node[T,label=above:$s_1$]() at (1,3){};      
 \node[T,label=above:$t_2$]() at (3,3){};  
         \node[B]() at (2,3){};  
            
            \node() at (2.65,2.65){$P_1$};        
             \node() at (2,0){(iv)};            
     	\end{tikzpicture}
	\end{center}
	\caption{Solutions for $\|S\|=2$}
	\label{last2}
\end{figure}
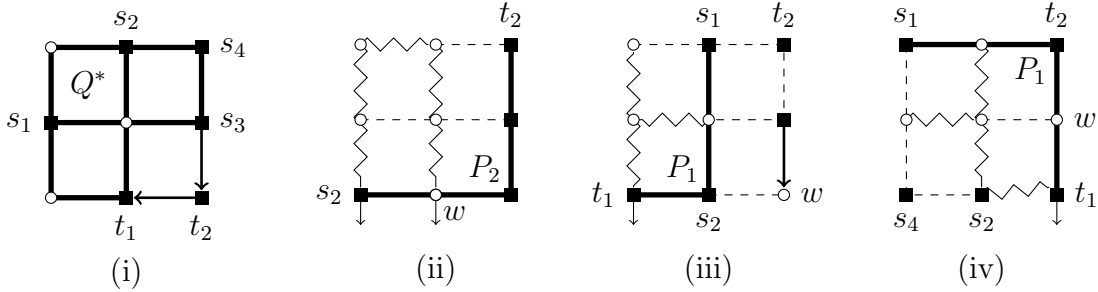

Assume now that $S$ contains one singleton, let $s_1,s_3\in S$ and let $w$ be the free vertex in $L$. 
By Cases c and d, we may assume that $s_2\in A\setminus B$ and $t_2\in B\setminus A$. 
If $w\in A\setminus B$, then there is a linkage in $L$ for $\pi_2$ and an AA-clip $CL(s_2,w)$ yields a solution as shown 
in Fig.\ref{last2}(ii). 
If $t_1\in  A\setminus B$, then we take a linkage $P_1$ for $\pi_1$ in the union of $A$ and the column containing $s_1$. Then there is an edge disjoint mating path
in $Q-B$
from any location of $s_3$  into $t_1$. Furthermore, we need to apply the shift  
 $(2,3)\mapsto w$, provided $\|B\setminus A\|=2$
 (for an example see Fig.\ref{last2} (iii)). 
 
Thus we conclude that $w,t_1,t_2\in B$ and $A\setminus B=\{s_2,s_4\}$.
 Then we take a linkage $P_1$ for $\pi_1$ in the union of $B$ and the row 
 that contains $s_1$, and there is an edge disjoint mating path from $s_3$ to $(3,3)$ (see Fig.\ref{last2} (iv)).

For $\|S\|=3$, w.l.o.g. we may assume that either $s_1,s_2,s_3\in S$ or $s_2,s_3,s_4\in S$.
In each case $L$ has two free vertices. First let $s_1,s_2,s_3\in S$.

(i) If $\|B\setminus A\|=0$, then we take a linkage $P_1$
 for $\pi_1$ in the union of $A$ and the column containing $s_1$. In the complement  of $P_1$
there is an AB-clip $CL(t_1,z)$, where $z=(1,3)$ (see an example
in Fig.\ref{last3}(i)). 

(ii)  If $\|B\setminus A\|=1$ and 
$B\setminus A$ has a terminal among $t_1,t_2$, say $t_1$, then 
we take a linkage  $P_1$ for $\pi_1$ 
in the union of $B$ and the row 
 that contains $s_1$. Since there is a free vertex $u\in A$, an 
 AB-clip $CL(u,t_1)$ remains in the complement of $P_1$
 (see an example  in Fig.\ref{last3}(ii)).

 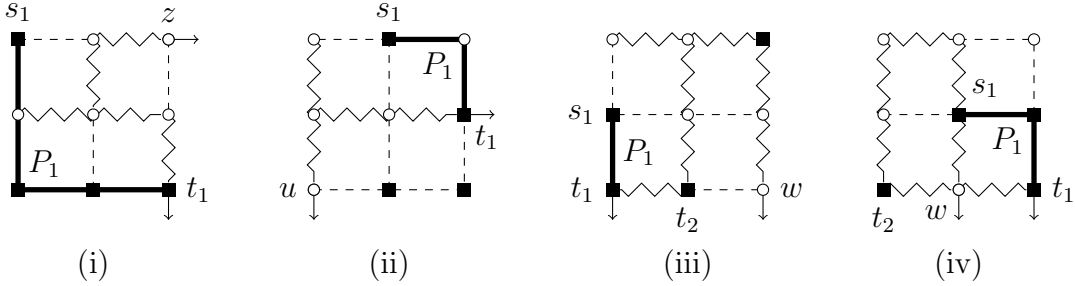
\begin{figure}[htp]
  \tikzstyle{T} = [rectangle, minimum width=.1pt, fill, inner sep=2.5pt]
\tikzstyle{B} = [circle, draw=black!, minimum width=1pt, fill=white, inner sep=1.5pt]
\tikzstyle{txt}  = [circle, minimum width=1pt, draw=white, inner sep=0pt]
\tikzstyle{Wedge} = [draw,line width=1.5pt,-,black!100]
\tikzstyle{M} = [circle, draw=black!, minimum width=1pt, fill=white, inner sep=1pt]
\begin{center}
\begin{tikzpicture}            
\draw[dashed]   (1,3)--(2,3) (3,3)--(3,2) (2,1)--(2,2); 

\draw[snake] (2,3)--(2.9,3);\draw[->] (3,3)--(3.4,3);
\draw[snake] (2,3)--(2,2.1) (1,2)--(2.9,2) (3,2)--(3,1.1);\draw[->] (3,1)--(3,.6);
\draw[line width=2pt] (1,3)--(1,1)--(3,1) ;    
      
\foreach \x in {1,2,3}\foreach \y in{1,2,3} \node[B]() at (\x,\y){};                                

            \node[T,label=right:$t_1$] (t1) at (3,1) {};     
            \node[T] () at (2,1) {};    \node[T] () at (1,1) {};  
            \node[T,label=above:$s_1$] (s1) at (1,3) {};   
             \node[B,label=above:$z$] (z) at (3,3) {};         
                    
\node() at (1.35,1.35){$P_1$};     
      \node() at (2,0){(i)};     
     	\end{tikzpicture}
	\hskip.5truecm	
	\begin{tikzpicture}            
\draw[dashed]   (2,3)--(1,3) (2,1)--(2,3) (1,1)--(3,1)--(3,2); 

\draw[snake] (1,2)--(2.9,2); \draw[->] (3,2)--(3.4,2);
\draw[snake] (1,3)--(1,1.1) ;    \draw[->] (1,1)--(1,.6);
    \draw[line width=2pt]   (2,3)--(3,3)--(3,2);
    
\foreach \x in {1,2,3}\foreach \y in{1,2,3} \node[B]() at (\x,\y){};                                

            \node[T] (t1) at (3,2) {};    \node () at (3.3,1.7) {$t_1$};
            \node[T] () at (2,1) {};    \node[T] () at (3,1) {};  
            \node[T,label=above:$s_1$] (s1) at (2,3) {};   
             \node[B,label=left:$u$] (u) at (1,1) {};    
                    
\node() at (2.65,2.65){$P_1$};     
      \node() at (2,0){(ii)};     
     	\end{tikzpicture}
	\hskip.5truecm	
		\begin{tikzpicture}
           
\draw[dashed]  (2,1) --(3,1) (1,3)--(1,2)--(3,2); 
 \draw[snake]  (2,3)--(2,1.1) (2,1)--(1.1,1) (1,3)--(2,3)--(2.9,3) (3,3)--(3,1.1);
 \draw[->]  (1,1) -- (1,.6); \draw[->]  (3,1) -- (3,.6);
  
   \draw[line width=2pt] (1,1) --(1,2);  
         
   \foreach \x in{1,2}  \foreach \y in{1,2,3} \node[B]() at (\x,\y){};      
                         
  \node[B,label=right:$w$](w) at (3,1){};     
\node[T,label=left:$s_1$]() at (1,2){};    \node[T,,label=left:$t_1$]() at (1,1){};  
        \node[T,label=below:$t_2$]() at (2,1){};      
        \node[T]() at (3,3){};   
          \node[B]() at (3,2){};                                
                \node() at (2,0){(iii)};       
            \node() at (1.35,1.5){$P_1$};                       
     	\end{tikzpicture}
	\hskip.5truecm				
	\begin{tikzpicture}
           
\draw[dashed]  (2,3) --(3,3) --(3,2) (1,2)--(2,2); 
 \draw[snake]  (1,3)--(1,1.1) (1,1)--(1.9,1) (1,3)--(2,3)--(2,1.1) (2,1)--(2.9,1);
 \draw[->]  (2,1) -- (2,.6); \draw[->]  (3,1) -- (3,.6);
  
   \draw[line width=2pt] (2,2) --(3,2)--(3,1) ;  
         
\foreach \y in{1,2} \node[T]() at (3,\y){};  
   \foreach \x in{1,2}  \foreach \y in{1,2,3} \node[B]() at (\x,\y){};      
                         
 \node[T,label=right:$t_1$]() at (3,1){};   
  \node[B](w) at (2,1){};         \node() at (1.7,.7){$w$};  
\node[B]() at (3,3){};    \node[T,label=below:$t_2$]() at (1,1){};  

\node[T]() at (2,2){};  \node() at (2.35,2.35){$s_1$};   
                                   
                \node() at (2,0){(iv)};       
            \node() at (2.65,1.65){$P_1$};                       
     	\end{tikzpicture}
			
\end{center}
	\caption{Solutions for  $\|S\|=3$ }
	\label{last3}
\end{figure}

(iii) If $\|B\setminus A\|=1$, and (ii) is not true, then $t_1,t_2\in A$.
If $w=(3,3)$ is a free vertex, then take a linkage  $P_1$
 for $\pi_1$ in the union of $A$ and the column containing $s_1$.
An AA-clip $CL(w,t_1)$ remains in the complement  of $P_1$ as is shown 
in  Fig.\ref{last3}(iii). 
If $(3,3)$ is not free, say $t_1=(3,3)$,  then we
take a linkage $P_1$ for $\pi_1$ in the union of $B$ and the row containing $s_1$.  In the complement of $P_1$ there is an AA-clip
$CL(w,t_1)$, where $w\in A\setminus B$ is a free vertex  (see 
 Fig.\ref{last3} (iv)).

 If  $\|B\setminus A\|=2$, then  $B\setminus A$ contains  a terminal among $t_1,t_2$, say $t_1$, and $A$ contains two free vertices.
Now we take a linkage $P_1$ for $\pi_1$ 
in the union of $B$ and the row 
 that contains $s_1$. Since there are two  free vertices in $A$, an 
 AA-clip remains in the complement of $P_1$. 

For  $s_1,s_3,s_4\in S$  we may assume that
$s_2\in A\setminus B$ and $t_2\in B\setminus A$, by Cases c and d. 
This implies that $\|B\setminus A\|\geq 1$. If $t_1\in B\setminus A$, then we have the solution as
above; if $t_1\in A$, then either $t_1=(3,3)$ or 
$w=(3,3)$ is a free vertex, 
 thus we obtain a solution as in (iii) above.\\
 
 Let $\|S\|=4$. By Case b, there is no pair in $S$, let $s_i\in S$, for $1\leq i\leq 4$, and
 $t_1,t_2\in L$. Since there are four edges between $S$ and $L$ each must be used by some linkage or mating path. Consider the `diagonal partition' of the vertices of $S$
 into $S_1=\{(1,1),(2,2)\}$ and $S_2=\{(2,1),(1,2)\}$; both vertices in $S_i$, $i=1,2$, exit from $S$ to the same part of $L$, either to $A\setminus B$ or to $B\setminus A$
 according to the following rule. 
 
 If $t_1,t_2\in A\setminus B$, then the terminals in $S_i$ containing $s_1$ exit to $A$, the vertices of the other diagonal exit  to $B$. Let $P$ be the (unique) path in $B(1)\cup A\cup B(2)$ between $s_1$ and its diagonal pair $s_\ell\in S_i$ ($2\leq \ell\leq 4$).  Now $P$ is the edge disjoint union of an $s_1,t_1$-path $P_1$ and a path mating $s_\ell$ to $t_1$ where it escapes from  $Q$. The vertex in $A(1)$ of the other diagonal is mated the  to the free vertex $(1,3)$ and the the other terminal of the diagonal is mated along $A(2)\cup B$ to the free vertex $(3,3)$, where they escape from $Q$ (see Fig.\ref{S4} (i)).

 If $t_1,t_2\in B\setminus A$, then the terminals of the diagonal $S_i$ containing the pair of the terminal at $(2,3)$ exit $S$ to $B$, and the vertices of the other diagonal exit $S$ through 
 the free vertices in $A\setminus B$.  For $t_2=(2,3)$, let $P$ be the (unique) path in $A(1)\cup B\cup A(2)$ between $s_2$ and its diagonal pair $s_\ell$.  Now $P$ is the edge disjoint union of an $s_2,t_2$-path $P_2$ and a path $P_0$ mating $s_\ell$ to $t_2$.  We extend $P_0$ to the free vertex $(3,3)$, where it exits $Q$ (see Fig.\ref{S4} (ii)).
  
 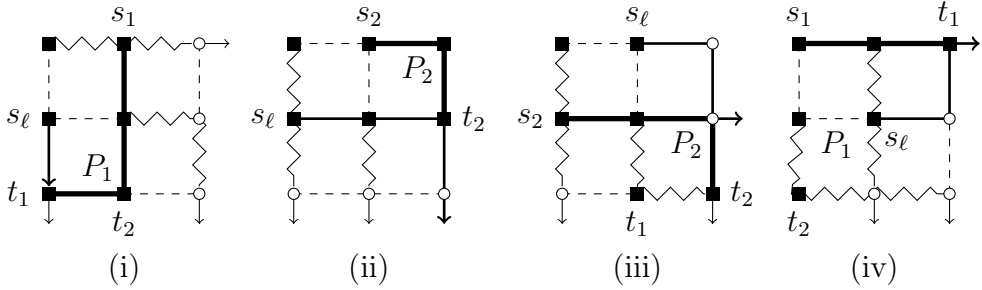
\begin{figure}[htp]
 \tikzstyle{T} = [rectangle, minimum width=.1pt, fill, inner sep=2.5pt]
\tikzstyle{B} = [circle, draw=black!, minimum width=1pt, fill=white, inner sep=1.5pt]
\tikzstyle{M} = [circle, draw=black!, minimum width=1pt, fill=white, inner sep=1pt]
\begin{center}
\begin{tikzpicture}            
\draw[dashed]   (1,3) -- (1,2)--(2,2) (3,3)--(3,2) (2,1)--(3,1); 
\draw[->,line width=1pt] (1,2)--(1,1.1);
\draw[->] (1,1)--(1,.6);
\draw[snake] (1,3)--(2.9,3);\draw[->] (3,3)--(3.4,3);
\draw[snake] (2,2)--(3,2)--(3,1.1);\draw[->] (3,1)--(3,.6);
\draw[line width=2pt] (1,1)--(2,1)--(2,3) ;    
      
\foreach \x in {1,2}\foreach \y in{1,2,3} \node[T]() at (\x,\y){};                                
\foreach \y in{1,2,3} \node[B]() at (3,\y){};

            \node[T,label=left:$t_1$] (t1) at (1,1) {};    
            \node[T,label=left:$s_\ell$] (sell) at (1,2) {};    
            \node[T,label=above:$s_1$] (s1) at (2,3) {};          
               \node[T,label=below:$t_2$] (t2) at (2,1) {};      
\node() at (1.65,1.35){$P_1$};     
      \node() at (2,0){(i)};     
     	\end{tikzpicture}
	\begin{tikzpicture}            
\draw[dashed]   (1,1) -- (3,1)  (1,3)--(2,3) --(2,2); 
\draw[snake] (1,3)--(1,1.1);\draw[->] (1,1)--(1,.6);
\draw[->,line width=1pt] (1,2)--(3,2)--(3,.6);
\draw[snake] (2,2)--(2,1.1);\draw[->] (2,1)--(2,.6);
\draw[line width=2pt] (2,3)--(3,3)--(3,2) ;    
      
\foreach \x in {1,2,3}\foreach \y in{2,3} \node[T]() at (\x,\y){};                                
\foreach \x in{1,2,3} \node[B]() at (\x,1){};

            \node[T,label=right:$t_2$] (t2) at (3,2) {};    
            \node[T,label=left:$s_\ell$] (t1) at (1,2) {};    
            \node[T,label=above:$s_2$] (s2) at (2,3) {};          
                    
\node() at (2.65,2.65){$P_2$};     
      \node() at (2,0){(ii)};     
     	\end{tikzpicture}
\begin{tikzpicture}            
\draw[dashed]   (1,1) -- (2,1) (1,3)--(2,3)--(2,2); 
\draw[snake] (1,3)--(1,1.1);\draw[->] (1,1)--(1,.6);
\draw[->,line width=1pt] (2,3)--(3,3)--(3,2)--(3.4,2);
\draw[snake] (2,2)--(2,1)--(2.9,1);\draw[->] (3,1)--(3,.6);
\draw[line width=2pt] (1,2)--(3,2)--(3,1) ;    
      
 \node[T]() at (1,3){};     \node[T]() at (2,2){};     
 \node[T,label=below:$t_1$]() at (2,1){};                                 
 \node[B]() at (1,1){}; \node[B]() at (3,3){}; \node[B]() at (3,2){};

            \node[T,label=right:$t_2$] (t2) at (3,1) {};    
            \node[T,label=above:$s_\ell$] (sl) at (2,3) {};    
            \node[T,label=left:$s_2$] (s2) at (1,2) {};          
                    
\node() at (2.65,1.65){$P_2$};     
      \node() at (2,0){(iii)};     
     	\end{tikzpicture}		
\begin{tikzpicture}            
\draw[dashed]   (3,1) -- (3,2)  (1,3)--(1,2)--(2,2); 
\draw[snake] (2,3)--(2,1.1);\draw[->] (2,1)--(2,.6);
\draw[->,line width=1pt] (2,2)--(3,2)--(3,3)--(3.4,3);
\draw[snake] (1,2)--(.9,1)--(3,1);\draw[->] (3,1)--(3,.6);
\draw[line width=2pt] (1,3)--(3,3);    
      
 \node[T,label=below:$t_2$]() at (1,1){};     \node[T]() at (1,2){};     \node[T]() at (2,3){};                          
 \node[B]() at (2,1){}; \node[B]() at (3,1){}; \node[B]() at (3,2){};

            \node[T,label=above:$t_1$] (t1) at (3,3) {};    
            \node[T](sl) at (2,2) {};    \node()at(2.3,1.7){$s_\ell$};
            \node[T,label=above:$s_1$] (s1) at (1,3) {};          
                    
\node() at (1.5,1.65){$P_1$};     
      \node() at (2,0){(iv)};     
     	\end{tikzpicture}		
\end{center}
	\caption{Solutions for  $\|S\|=4$ }
	\label{S4}
\end{figure}

Assume now that $\|B\setminus A\|=0$, and $(3,3)$ is a terminal, say $t_2=(3,3)$. We begin as before, the vertices of the diagonal $s_2, s_\ell\in S_i$  exit $S$ to $B$ and we form an $s_2,s_\ell$-path
$P$ in $A(1)\cup B\cup A(2)$. Now $s_\ell$ is mated along $P$ to the free vertex $(2,3)$ where it escapes from $Q$. The other part of $P$ from $s_2$ is extended to $t_2$ thus completing a linkage $P_2$ and making $(3,3)$ a free vertex. Then the vertices of the other diagonal can be mated in
$B(1)\cup A\cup B(2)$ into $(3,3)$ and into the free vertex of  $A\setminus B$ 
(see Fig.\ref{S4} (iii)).

Finally assume that $B\setminus A$ contains one terminal, say $t_1\in B\setminus A$.
As before, the vertices of the diagonal $s_1, s_\ell$  exit $S$ to $B$ and form an $s_1,s_\ell$-path $P$ in $A(1)\cup B\cup A(2)$. Now $P$ is the edge disjoint union of an $s_1,t_1$-path $P_1$ and a path $P_0$ mating $s_\ell$ to $t_1$, where it escapes from $Q$.  
Since there are two free vertices on $A$, the terminals of the other diagonal can be mated into them in
$B(1)\cup A\cup B(2)$  where they escape from $Q$ 
(see Fig.\ref{S4} (iv)).
    \end{proof}    
       \begin{lemma}
   \label{heavy5}
For any set of  five terminals, $\{s_1,t_1,s_2,s_3,s_4\}\subset Q$,
there is
 an $s_1,t_1$-path $P_1\subset Q$, and the complement of $P_1$ contains edge disjoint escape paths from $s_2,s_3,s_4$ into three distinct exit vertices of $L$ such that $B\setminus A$ contains at most one exit.
   \end{lemma}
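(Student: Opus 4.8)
The plan is to mimic the structure of the proof of Lemma~\ref{heavy6}: put $S = Q - L$ for the $2\times 2$ square on $(1,1),(1,2),(2,1),(2,2)$, write $\|S\| = |T\cap S|$, and split into cases on $\|S\|$, which ranges over $0,1,2,3,4$ since $|S|=4$ and $|T\cap Q|=5$, so that $L$ carries the remaining $5-\|S\|$ terminals. In each case the object to build is the linkage $P_1$ for $\pi_1$ together with three pairwise edge disjoint escape paths for $s_2,s_3,s_4$ to distinct vertices of $L$, at most one of which lies in $B\setminus A = \{(1,3),(2,3)\}$. The three recurring tools will be: (i) routing $P_1$ through $S$ so that $(1,1)$ is not an interior vertex of $P_1$ whenever a singleton sits at $(1,1)$, so that $(1,1)$ can still reach $L$ through the rest of $S$; (ii) an AA-clip or AB-clip built from the part of $S$ and the four boundary edges joining $S$ to $L$ that $P_1$ leaves unused, escaping two singletons at once; and (iii) a shift along $L$ moving a singleton out of $B\setminus A$ into a free vertex of $A$, which is available because $Q$ has $9-5=4$ terminal-free vertices and $P_1$ together with a clip only ever blocks boundary edges, never all of $A$.

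First I would treat the regime $\pi_1\subset L$ (which includes $\|S\|=0$): here $P_1$ is forced to be the unique subpath of $L$ between $s_1$ and $t_1$, and once it is routed the vertices $s_1,t_1$ become free exit vertices; the singletons lying in $S$ then escape through the $S$-to-$A$ edges not used by $P_1$ (with a singleton at $(1,1)$ first stepping into $S$), and a singleton already in $B\setminus A$ is either the one permitted exit there or is shifted along $L\setminus P_1$ into $s_1$ or $t_1$. Next the mixed case $s_1\in S$, $t_1\in L$: route $P_1$ from $s_1$ out of $S$ through an $S$-to-$A$ edge if $t_1\in A$, and through the edge at $(2,2)$ leading to $(2,3)$ if $t_1\in B\setminus A$, using tool (i); then the at most three remaining singletons escape through an AA- or AB-clip on the residual graph, the relevant pictures being minor variants of those already drawn in Figs.~\ref{s=4} and \ref{s=4AB}, together with at most one shift. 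Finally $\pi_1\subset S$: take $P_1\subset S$ as in (i); $S\setminus P_1$ is then a path of length at most two which, glued to the four $S$-to-$L$ boundary edges, supplies the clips directly, and the singletons outside $S$ escape directly or after one shift.

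I expect the genuine obstacle to be the conjunction of two things: the configuration $\pi_1=\{(1,1),(2,2)\}$ with singletons at the other two vertices of $S$ (where $P_1$ must be a bent path such as $(1,1),(2,1),(2,2)$ and one must check by hand, as in Fig.~\ref{s=4}(iii) and Fig.~\ref{s=4AB}(iii), that the remaining two singletons can still be clipped to two vertices of $A$, or to one vertex of $A$ and $(1,3)$, disjointly from $P_1$), and the simultaneous edge-disjointness bookkeeping when $\|S\|$ is large and $\pi_1\subset S$: there are only four edges leaving $S$, so with three singletons inside $S$ the linkage and the three escapes must use all four of them, leaving no slack, and one has to verify case by case that the forced routing neither creates a conflict nor pushes two escapes into $B\setminus A$. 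As in the proof of Lemma~\ref{heavy6}, I would organise the write-up around a small number of explicit pictures, one per residual configuration of $\pi_1$ and the singletons, reusing the clip figures of that proof whenever the configuration matches.
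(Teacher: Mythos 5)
Your decomposition---casing on $\|S\|$ and on the position of $\pi_1$ relative to $S$, $A$ and $B$, with AA/AB-clips for the singletons, a shift to empty $B\setminus A$ when it holds two terminals, and a routing of $P_1$ that keeps $(1,1)$ usable---is the same skeleton the paper uses, and you correctly flag the tight configurations ($\pi_1=\{(1,1),(2,2)\}$, and all four vertices of $S$ terminal so that every $S$--$L$ edge is forced). But for a statement of this kind the skeleton is not the proof: the entire content of the lemma is the configuration-by-configuration verification that a linkage, a clip and at most one shift can be chosen pairwise edge-disjointly with the exit constraint, and your write-up defers exactly that to ``explicit pictures'' it does not produce. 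The paper's proof is essentially the catalogue of those pictures (Figs.~\ref{5S4}, \ref{Q5c}, \ref{4Qe}).

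Beyond incompleteness, two of the blanket claims you use to wave the verification through do not survive as stated. First, when $\pi_1\subset L$ the path $P_1$ is not ``forced'' into $L$, and in general the choice of $P_1$ is the delicate point: when $s_1=(i,j)\in S$ the paper routes $P_1$ inside $B\cup A(i)$ or $A\cup B(j)$ (not, as you propose, out of $S$ through the edge from $(2,2)$ to $(2,3)$ whenever $t_1\in B\setminus A$) precisely so that a clip survives in the complement, and there is a configuration ($s_1\in B(1)$, $t_1=(3,2)$, $(3,3)$ a terminal) in which the natural AA-clip fails and an AB-clip $CL(t_1,(1,3))$ must be substituted (Fig.~\ref{4Qe}(iii)). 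Second, the assertion that a shift out of $B\setminus A$ into a free vertex of $A$ is always available because ``$P_1$ together with a clip only ever blocks boundary edges, never all of $A$'' is exactly the thing that must be checked rather than a reason: the shift travels through $(3,3)$ and then along $A$, and whether its edges collide with $P_1$ or with the clip depends on the configuration (this is why the paper sometimes shifts ``until the first free vertex in $A$'' and sometimes must abandon the shift in favour of a different clip). So the proposal is the right plan, but the casework is the proof, and the casework is missing.
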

    \begin{proof} 
We use clips and shifting 
along $L$ as defined in Section \ref{tools}. Let $S=Q-L$.

Case a: $\pi_1\subset S$. Assuming that $\|S\|=2$, all singletons are in $L$. If $\|B\setminus A\|=2$, then we form a pair $\pi_0=\{z,w\}$,where $z\in B\setminus A$ is a terminal and $w\in A$ is a free vertex.
Since $Q$ is $2$-path-pairable, there is a linkage $P_1$ and $P_0$, for $\pi_1$ and $\pi_0$, respectively. Then $P_0$ is an escape path of $z$ to $w\in A$ making $z$ a free vertex in $B\setminus A$.

For $\|S\|=3$, let $s_1,t_1,s_2\in S$. If $\|B\setminus A\|=2$, then we make $(2,3)\in B\setminus A$ a free vertex by shifting $(2,3)\mapsto (3,3)$.   
Then we form a pair $\pi_0=\{s_2,w\}$, where $w\in A\setminus B$ is a free vertex.
Since    $Q^\prime =Q-(3,3)$ is $2$-path pairable, there is  a linkage $P_1$, $P_0$ in $Q^\prime$ for  $\pi_1$, $\pi_0$. Then $P_0$ is an escape path for $s_2\in S$ to $w\in A$, the remaining singletons are in $L$.

For $\|S\|=4$, two terminals must escape from $S$ and there is one terminal in $L$. Therefore, either both vertices $u=(3,1)$ and $w=(3,2)$ are free for escaping, or $\|B\setminus A\| =\emptyset$ and thus
$z=(1,3)$ is available for escaping with a free
vertex among $u,w$.

 Let $s_\ell=(1,1)$, $1\leq \ell \leq 4$.
If $\ell\neq 1$, then take a linkage in $S-s_\ell$ for $\pi_1$. Escape paths for $s_\ell$ and the fourth terminal in $S$ 
to any two of $u,w,z$ are obtained easily along the $8$-cycle on $Q-(3,3)$ (see
Fig.\ref{5S4}(i)). If $\ell=1$, then take a linkage in $S$ for $\pi_1$, then escape paths  to any two of $u,w,z$ exist in the complement of $S$ (see Fig.\ref{5S4}(ii)).

	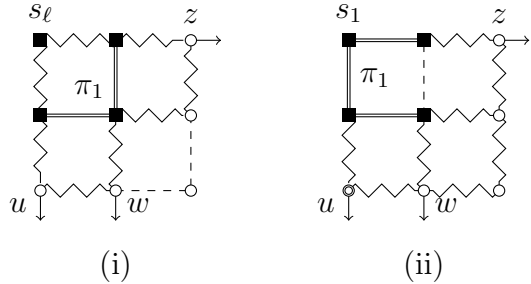
\begin{figure}[htp]
 \tikzstyle{T} = [rectangle, minimum width=.1pt, fill, inner sep=2.5pt]
\tikzstyle{B} = [circle, draw=black!, minimum width=1pt, fill=white, inner sep=1.5pt]
\tikzstyle{M} = [circle, draw=black!, minimum width=1pt, fill=white, inner sep=1pt]
\begin{center}
\begin{tikzpicture}
           
\draw[dashed]  (2,1) --(3,1)--(3,2); 
 \draw[snake]  (1,3) -- (1,1.1) (1,1)--(2,1)--(2,2)--(2.9,2)
(1,3)--(2.9,3) (3,2)--(3,3);
  \draw[->]  (1,1) -- (1,.6); \draw[->]  (3,3) -- (3.4,3);
   \draw[->]  (2,1) -- (2,.6);
   \draw[double] (2,3) -- (2,2) -- (1,2);  
        
\foreach \x in {1,2}\foreach \y in{2,3} \node[T]() at (\x,\y){};                                
  \node[B]() at (1,1){}; \node[B]() at (3,1){};  \node[B]() at (2,1){};
           \node[B]() at (3,2){};      \node() at (2.3,.8){$w$}; 
           
\node[T,label=above:$s_\ell$]() at (1,3){};            
  \node[B,label=above:$z$](z) at (3,3){};   
     
                \node() at (.7,.8){$u$};                           
            \node() at (1.65,2.35){$\pi_1$};     
      \node() at (2,0){(i)};     
                                
     	\end{tikzpicture}
	\hskip1cm	
	\begin{tikzpicture}
           
\draw[dashed]   (2,2) --(2,3); 
 \draw[snake]  (2,2)--(3,2)--(3,1)--(2,1) (1,1)--(2,1) --(2,2)
 (1,2)--(1,1.1) (2,3)--(3,3)--(3,2);
  \draw[->]  (1,1) -- (1,.6); \draw[->]   (3,3)  --(3.4,3);
  \draw[->]  (2,1) -- (2,.6);
   \draw[double] (2,3) -- (1,3) -- (1,2)--(2,2);  
        
\foreach \x in {1,2}\foreach \y in{2,3} \node[T]() at (\x,\y){};                                

\node[B]() at (2,1){};  \node[B]() at (1,1){};  
\node[B]() at (3,2){};  \node[B]() at (3,1){};  
   \node[B,label=above:$z$](z) at (3,3){};            
    \node[T,label=above:$s_1$](s1) at (1,3){};  
     \node() at (2.3,.8){$w$}; 
   \node[M]() at (1,1){};           
         \node() at (.7,.8){$u$};                           
            \node() at (1.35,2.5){$\pi_1$};     
      \node() at (2,0){(ii)};     
                                
     	\end{tikzpicture}

	\end{center}
	\caption{$\pi_1\subset S$ and $\|S\|=4$}
	\label{5S4}

	\end{figure}

Case b:  $\|S\|\leq 1$. For $\|S\|=0$, we define a linkage in $L$ for $\pi_1$. If $\|B\setminus A\|=2$ and $t_1\notin B\setminus A$,  then we make  $(2,3)$ free by shifting along the path through $c=(3,3)$ until the first free vertex in $A$ (that might be $t_1$). For $\|S\|=1$ and $\pi_1\subset L$ we do the same as before, then the terminal in $S$ escapes to $s_1$. 

Let $s_1\in S$. If  $t_1\in B\setminus A$, then there is a linkage for $\pi_1$
in the union of $B$ and the row containing $t_1$. If $t_1\in A$ and
$\|B\setminus A\|=2$, then first we make $(2,3)$ free by shifting terminals along the path through $c$ until the first free vertex in $A$, then the terminal in $S$ escapes to $t_1$ (or its shift).
 
\begin{figure}[htp]
  \tikzstyle{T} = [rectangle, minimum width=.1pt, fill, inner sep=2.5pt]
\tikzstyle{B} = [circle, draw=black!, minimum width=1pt, fill=white, inner sep=1.5pt]
\tikzstyle{txt}  = [circle, minimum width=1pt, draw=white, inner sep=0pt]
\tikzstyle{Wedge} = [draw,line width=1.5pt,-,black!100]
\tikzstyle{M} = [circle, draw=black!, minimum width=1pt, fill=white, inner sep=1pt]
\begin{center}
\begin{tikzpicture}    
 \draw[dashed]   (2,2)--(2,1) (1,1)--(1,0)--(2,0); 
 \draw[snake]  (1,0)--(0,0) -- (0,2)-- (1.9,2) (0,1)-- (2,1)--(2,0.15)  (1,1) -- (1,2) ; 
 \draw[->] (1,0)--(1,-0.4); \draw[->]  (2,0)--(2,-0.4);
 \draw[->]  (2,2)--(2.4,2);
  \draw[line width=2pt] (1,0) -- (2,0);  
 \foreach \x in{0,1,2}\foreach \y in{0,1,2}\node[B]() at (\x,\y){};                        

\node[B] (z) at (2,2) {};        \node()at(2,2.25){$z$};     
\node[T] (s1) at (1,0) {};    \node()at(0.7,-0.3){$s_1$};
\node[T] (t1) at (2,0) {};     \node()at(2.3,-0.3){$t_1$};
         
           \node[txt]() at (1.5,.35){$P_1$}; 
            \node() at (.5,1.5){$H$};   \node() at (1,-.85){(i)}; 
     	\end{tikzpicture}
	\hskip.5cm
\begin{tikzpicture}
 \draw[dashed]   (1,2)--(2,2)--(2,1)-- (0,1); 
  \draw[snake]  (0,0) -- (0,2)--(1,2)  (1,1)--(1,0)
  (1,1) -- (1,2)  ; 
  \draw[line width=2pt] (2,0) -- (2,2);  
   \draw[->,line width=1pt] (0,0)--(.9,0) (1,0)--(1.9,0);
   \draw[->](0,0)--(0,-0.4); \draw[->](1,0)--(1,-0.4);
   \foreach \x in{0,1,2}\foreach \y in{0,1,2}\node[B]() at (\x,\y){};                        
  
\node[T]() at (1,2){}; 
 \node() at (-0.3,-0.3){$u$}; 
 \node() at (1.3,-0.3){$w$}; 
\node[T]() at (0,1){};      
           \node() at (2,-0.3){$c$}; 
          \node[T,label=right:$t_1$] (t1) at (2,2) {};                  
          \node[T,label=right:$s_1$] (s1) at (2,0) {};   
           \node[txt]() at (1.65,1.5){$P_1$}; 
           \node() at (1,-.85){(ii)}; 
     	\end{tikzpicture}
	\hskip.5cm
	\begin{tikzpicture}
   
 \draw[dashed]   (2,1)-- (1,1) (0,0)--(2,0)--(2,1); 
  \draw[snake]  (0,0) -- (0,2)--(1,2)  (0,1)-- (1,1)--(1,0)
  (1,1) -- (1,2) --(2,2) ; 
  \draw[line width=2pt] (2,2) -- (2,1);  
  \draw[->](0,0)--(0,-0.4); \draw[->](1,0)--(1,-0.4);
   \draw[->]  (2,2)--(2.4,2);
    \foreach \x in{0,1,2}\foreach \y in{0,1,2}\node[B]() at (\x,\y){};                        
      
\node[T]() at (1,2){}; 
  \node() at (-0.3,-0.3){$u$}; 
  \node() at (1.3,-0.3){$w$}; 
\node[T]() at (0,1){};      
\node[T] () at (0,2) {};  
    
          \node[T,label=right:$t_1$] (t1) at (2,1) {};          
            \node()at(2.3,1.7){$s_1$};           
          \node[T,label=above:$z$] () at (2,2) {}; 
          \node[txt]() at (1.65,1.5){$P_1$};  \node() at (.5,1.5){$H$}; 
            \node() at (1,-.85){(iii)}; 
                  	\end{tikzpicture}
	\hskip.5cm
	\begin{tikzpicture}
   
 \draw[dashed]   (2,1)-- (1,1)--(1,2) (0,1)--(0,0)--(2,0)--(2,1); 
  \draw[snake]  (0,1) -- (0,2)--(1,2)  (0,1)-- (1,1)--(1,0)
   (1,2) --(2,2) ; 
  \draw[line width=2pt] (2,2) -- (2,0)--(1,0);  
  \draw[->](1,0)--(1,-0.4);
   \draw[->]  (2,2)--(2.4,2);
    \foreach \x in{0,1,2}\foreach \y in{0,1,2}\node[B]() at (\x,\y){};                        
      
\node[T]() at (1,2){}; 
  \node() at (1.3,-0.3){$t_1$}; 
\node[T]() at (0,1){};      
\node[T] () at (0,0) {};  \node[T] () at (1,0) {};        
          \node[T,label=above:$s_1$] () at (2,2) {}; 
          \node[txt]() at (1.6,.4){$P_1$}; 
            \node() at (1,-.85){(iv)}; 
                  	\end{tikzpicture}
\end{center}
	\caption{$\pi_1\subset A$ or $\pi_1\subset B$}
	\label{Q5c}
	\end{figure}
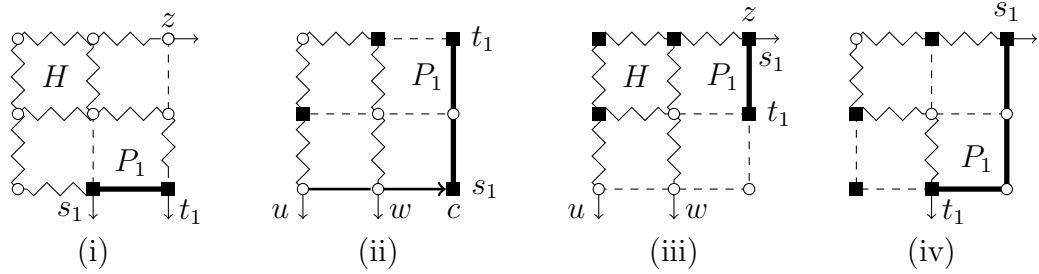

 Case c: $\pi_1\subset A$. For $\|S\|=2,3$, we have $\|B\setminus A\|\leq 1$. We take a linkage $P_1$ in $A$ for $\pi_1$, and consider the graph $H$ in the complement of $P_1$ shown in   Fig.\ref{Q5c}(i). For 
 any three terminals in $S$
 there exist pairwise edge disjoint escape paths to the free vertices $s_1,t_1$ and $z=(1,3)$.
  
Case d:  $\pi_1\subset B$. First we take the linkage $P_1\subset B$ for $\pi_1$.
If  $\|S\|=2$ and there is a terminal in $A\setminus B$, then shift it to $c=(3,3)$  along $A$. In the complement of $P_1$ an AA-clip $CL(u,w)$ is shown in Fig.\ref{Q5c}(ii), where $u,w\in A\setminus B$. For $\|S\|=3$, Fig.\ref{Q5c}(iii) shows a subgraph $H$ with the property that for any  three terminals in $S$, there are pairwise edge disjoint escape paths into the free vertices $u,w$ and $z=(1,3)$.
 
 Case e: $\|S\|=3$ or $2$. If $\pi_1\subset L$ and $\|S\|=3$, we take the linkage $P_1\subset L$ for $\pi_1$, then $H$
 in Fig.\ref{Q5c}(iii) can be used to escape. 
For $\|S\|=2$ we may assume, by Cases c and d, that $s_1\in A\setminus B$, $t_1\in B\setminus A$. The AA-clip 
 in Fig.\ref{Q5c}(ii) works if both vertices of $A\setminus B$
 are free. If it is not the case, then an AB-clip $CL(s_1,t_1)$
  is defined in Fig.\ref{Q5c}(iv).

 \begin{figure}[htp]
  \tikzstyle{T} = [rectangle, minimum width=.1pt, fill, inner sep=2.5pt]
\tikzstyle{B} = [circle, draw=black!, minimum width=1pt, fill=white, inner sep=1.5pt]
\tikzstyle{txt}  = [circle, minimum width=1pt, draw=white, inner sep=0pt]
\tikzstyle{Wedge} = [draw,line width=1.5pt,-,black!100]
\tikzstyle{M} = [circle, draw=black!, minimum width=1pt, fill=white, inner sep=1pt]
\begin{center}
		\begin{tikzpicture}
           
\draw[dashed]  (2,2) --(3,2)--(3,1) (1,3)--(2,3) (1,1)--(2,1); 
 \draw[snake]  (2,3)--(2,1.1) (1,3)--(1,1.1) (2,1)--(2.9,1) (1,2)--(2,2);
 \draw[->]  (1,1) -- (1,.6); \draw[->]  (3,1) -- (3,.6);
  
   \draw[line width=2pt] (2,3) --(3,3)--(3,2);  
         
   \foreach \x in{1,2,3}  \foreach \y in{1,2,3} \node[B]() at (\x,\y){};      
                         
  \node()at(3.2,.8){$w$};       \node()at(.75,.8){$u$}; 
\node[T,label=above:$s_1$]() at (2,3){};    
\node[T,,label=right:$t_1$]() at (3,2){};  
      \node[T]() at (2,1){};   
                               
                \node() at (2,0){(i)};       
            \node() at (1.35,1.5){$P_1$};                       
     	\end{tikzpicture}
	\hskip.5truecm				
	\begin{tikzpicture}
           
\draw[dashed]  (2,3) --(3,3) --(3,2); 
 \draw[snake]  (1,3)--(1,1.1) (1,1)--(1.9,1) (1,3)--(2,3)--(2,1.1) (2,1)--(2.9,1);
 \draw[->]  (2,1) -- (2,.6); \draw[->]  (3,1) -- (3,.6);
  
   \draw[line width=2pt] (1,2) --(3,2)--(3,1) ;  
         
   \foreach \x in{1,2,3}  \foreach \y in{1,2,3} \node[B]() at (\x,\y){};      
                         
 \node[T,label=right:$t_1$]() at (3,1){};   
  \node[B](w) at (2,1){};         \node() at (1.7,.8){$w$};
      \node[T]() at (1,1){};  

\node[T,label=left:$s_1$]() at (1,2){};    
        \node()at(2.75,.8){$u$};                             
                \node() at (2,0){(ii)};       
            \node() at (2.65,1.65){$P_1$};                       
     	\end{tikzpicture}
	\hskip.5truecm	
\begin{tikzpicture}            
\draw[dashed]  (3,2)--(2,2) (2,1)--(3,1)--(3,3); 

\draw[snake] (1,3)--(2,3)--(2.9,3)
;\draw[->] (3,3)--(3.4,3);
\draw[snake] (2,3)--(2,2.1) (1,2)--(2,2)--(2,1.1);
;\draw[->] (2,1)--(2,.6);
\draw[line width=2pt] (1,3)--(1,1) --(2,1);    
      
\foreach \x in {1,2,3}\foreach \y in{1,2,3} \node[B]() at (\x,\y){};                                

            \node[T] () at (3,1) {};     \node () at (2.3,.8) {$t_1$};   
            \node[T] () at (2,1) {};     
            \node[T,label=above:$s_1$] (s1) at (1,3) {};   
             \node[B,label=above:$z$] (z) at (3,3) {};         
                    
\node() at (1.35,1.5){$P_1$};     
      \node() at (2,0){(iii)};     
     	\end{tikzpicture}				
\end{center}
	\caption{$s_1\in S$ and  $\|S\|=3$ }
	\label{4Qe}
\end{figure}
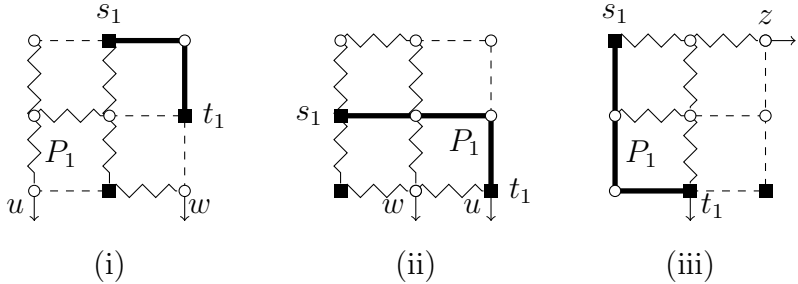

Assume now that $s_1\in S$.  If $\|S\|=2$, let $s_1,s_2$ be the two terminals in $S$, and let $w\in A$ be a free vertex (eventually $w=t_1$). Since $Q$ is weakly $2$-linked, there is an $s_1,t_1$-path $P_1$ and an edge disjoint escape path from $s_2$ to $w$. For $\|S\|=3$, 
let $s_1=(i,j)$, $1\leq i,j\leq 2$. 
If $t_1\in B$, then we take the (unique) $s_1,t_1$-path $P_1\subset B\cup A(i)$; if $t_1\in A\setminus B$ then we take the linkage $P_1\subset A\cup B(j)$ for $\pi_1$. In the first case we define an AA-clip $CL(u,w)$, where $u,w\in A$ are free vertices (see Fig.\ref{4Qe}(i) and (ii)). 
In the second case, after selecting the linkage $P_1$ as described above, the only case when no AA-clip $CL(u,w)$ can be defined is  $s_1\in B(1)$,
$t_1=(3,2)$, furthermore, $(3,1)$ is a free vertex, and $(3,3)$
is a terminal.
Then $z=(1,3)$ is a free vertex, and we may use an AB-clip $CL(t_1,z)$ for escaping the two terminals (see Fig.\ref{4Qe}(iii)).\end{proof}


\begin{thebibliography}{99}
\bibitem{CCT} W-T. Chan, F.Y.L. Chin, and  H-F. Ting,
Escaping a grid by edge-disjoint paths. Algorithmica 36 (2003), no. 4, 343?359.

\bibitem{CSFGYLS}
L. Csaba, R.J. Faudree, A. Gy\'arf\'as, J. Lehel, and R.H. Schelp, Networks communicating
for each pairing of terminals. Networks 22 (1992) 615-626.

\bibitem{F} R.J. Faudree, Properties in pairable graphs. New Zealand Journal of Mathematics. 21 (1992), 91--106.

\bibitem{infty}  A. Jobson, A.K. K\'ezdy,  and J. Lehel, The path-pairability of  infinite grids. 2017 unpublished.

\bibitem{pxp} A. Jobson, A.K. K\'ezdy, and J. Lehel, The path--pairability of the product of paths. 2016 unpublished.
 
 \bibitem{66} A. Jobson, A.K. K\'ezdy, and J. Lehel, The $6\times 6$ grid is $4$-path-pairable. 2017 unpublished.
 
\bibitem{LMS} C.-L. Li, S. T. McCormick, and D. Simchi-Levi. The point-to-point delivery and connection problems:
complexity and algorithms. Discrete Appl. Math. 36 (1992) 267--292.

 
\bibitem{WS} W. Schwärzler, 
On the complexity of the planar edge-disjoint paths problem with terminals on the outer boundary. 
Combinatorica 29 (2009) 121--126. 

\bibitem{T} C. Thomassen, $2$--linked graphs, Europ. J. Combin. 1 (1980) 371--378.

\bibitem{rec} Y. Zhu, J. Wu, S. K. Lam, and T. Srikanthan, Reconfiguration algorithms for degradable VLSI arrays with switch faults, 2012 IEEE 18th International Conference on Parallel and Distributed Systems, Singapore, 2012, 356--361.

\end{thebibliography}
\end{document}